\newtheorem{theorem}{Theorem}
\newtheorem{lem}[theorem]{Lemma}
\newtheorem{prop}[theorem]{Proposition}
\newtheorem*{theorem*}{Theorem}
\newtheorem*{lem*}{Lemma}
\newtheorem*{prop*}{Proposition}
\newtheorem*{cor*}{Corollary}
\theoremstyle{definition}
\newtheorem*{rem*}{Remark}
\newtheorem*{rems*}{Remarks}
\newtheorem*{quest*}{Question}
\newtheorem*{notat*}{Notation}
\newtheorem*{ex*}{Example}
\newtheorem*{defin*}{Definition}
\newenvironment{tequation}{
	\begin{equation}
		\begin{minipage}[center]{0.9\linewidth}}{%
		\end{minipage}
	\end{equation}\ignorespacesafterend}
\newenvironment{tequation*}{
	\begin{equation*}
		\begin{minipage}[center]{0.9\linewidth}}{%
		\end{minipage}
	\end{equation*}\ignorespacesafterend}
\def \R{{\mathbb{R}}}
\def \cH{{\mathcal{H}}}
\DeclareMathOperator{\dist}{dist}
\newcommand{\0}[1]{\overline{#1}}
\newcommand{\1}[1]{\widetilde{#1}}
\newcommand{\2}[1]{{}_{|#1}}
\newcommand{\inline}[1]{\quad\text{#1}\quad}
\newcommand{\afterline}[1]{\qquad\text{#1}\ }
\newcommand{\var}{\,\cdot\,}
\newcommand{\such}{\, : \,\,}
\let\xiff\xLeftrightarrow
\let\oldin\in
\DeclareRobustCommand{\in}{\oldin\nolinebreak[4]}
\let\indentedrule\hrulefill 
\renewcommand{\hrulefill}{\noindent\indentedrule}
\begin{document}

\title[Geometry of planar curves]{Geometry of planar curves intersecting many lines in a few points}

\author{D.~Vardakis}
\address{Department of Mathematics, Michigan State University, East Lansing, MI. 48823}
\email{jimvardakis@gmail.com}

\author{A.~Volberg}
\address{Department of Mathematics, Michigan State University, East Lansing, MI. 48823;\hfill\break\indent
		Hausdorff Center for Mathematics, Bonn, Germany}
\email{volberg@msu.edu}


\keywords{Hausdorff dimension, Lipschitz function, Marstrand's theorem}



\begin{abstract}
	The local Lipschitz property is shown for the graph avoiding multiple point intersection with lines directed in a given cone. The assumption is much stronger than those of Marstrand's well-known theorem, but the conclusion is much stronger too. Additionally, a continuous curve with a similar property is $\sigma$-finite with respect to Hausdorff length and an estimate on the Hausdorff measure of each ``piece'' is found.
\end{abstract}

\maketitle

\section{The statement of the problem}

The problem at hand is to better understand the structure of Borel sets in~$\R^2$ that have a small intersection with parallel shifts of lines from a whole cone. Here, we work only with sets that are graphs and continuous curves. So we have strong assumptions. But the results claim some estimate on the Hausdorff measure (not merely the Hausdorff dimension).

Initially, we show that a function's graph intersecting all parallel shifts of lines from a nondegenerate cone  in at most two points is locally Lipschitz and also present a counter-example showing this fails if more intersection points are allowed.

Next, we prove that any curve that has finitely many intersections with a cone of lines is $\sigma$-finite with respect to Hausdorff length and we find a bound on the Hausdorff measure of each ``piece.''

On the other hand, in \cite{EL} it was shown that, given countably many graphs of functions, there is another function whose graph has only one intersection with all shifts of the given graphs but whose graph has dimension $2$.

This result shows that there is a ``thick'' graph having only one intersection with all shifts of countably many other graphs. In our turn, we show that the graph having finitely many intersection with shifts of the whole cone of linear functions must be in fact very ``thin''.

\begin{prop} \label{2D}
	Let $\lambda>0$ be a fixed number and consider all the cones of lines with slopes between $\lambda$ and $-\lambda$ (containing the vertical line). If $f\colon(0,1)\to\R$ is a continuous function such that any line of these cones intersects its graph at at most two points, then $f$ is locally Lipschitz.
\end{prop}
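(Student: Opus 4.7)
The plan is to argue by contrapositive: if $f$ fails to be locally Lipschitz at some $x_0\in(0,1)$, then the graph of $f$ contains three collinear points lying on a line of slope of absolute value strictly greater than $\lambda$, contradicting the hypothesis. The central object will be the secant-slope function $\sigma_p(y):=\frac{f(y)-f(p)}{y-p}$, defined for $p\in(0,1)$ and $y\in(0,1)\setminus\{p\}$. The hypothesis is equivalent to the assertion that, for every base point $p$, the function $\sigma_p$ takes each value $v$ with $|v|\geq\lambda$ at most once, since a second preimage $y'$ would yield three collinear graph points $(p,f(p)),(y,f(y)),(y',f(y'))$ on a steep line.

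I would first analyze $\sigma_{x_0}$. The failure of the Lipschitz property at $x_0$ means (after passing to a subsequence and, if necessary, reflecting) that there are $y_n\to x_0^{+}$ with $\sigma_{x_0}(y_n)\to+\infty$. If $\sigma_{x_0}$ admits any interior local extremum near $x_0$ with value $>\lambda$ (resp.\ $<-\lambda$), the intermediate value theorem, applied on each side of that extremum, immediately produces two distinct points with a common steep slope, giving three collinear graph points and we are done. In the remaining case, the injectivity condition on $\{|\sigma_{x_0}|\geq\lambda\}$ combined with continuity and unboundedness of $\sigma_{x_0}$ forces the set $\{y:\sigma_{x_0}(y)>\lambda\}$ to consist of a single component $(x_0,b)$ on which $\sigma_{x_0}$ strictly decreases from $+\infty$ at $x_0^{+}$ to $\lambda$ at $b$; the very same injectivity prevents any analogous component on the left of $x_0$, since two such components would have overlapping images in $(\lambda,\infty)$.

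The main step is then the following. Pick any $p\in(x_0,b)$ and write $s:=\sigma_{x_0}(p)\in(\lambda,\infty)$. I claim that $y=x_0$ is a strict local maximum of $\sigma_p$ with value $s$. The computation is powered by the algebraic identity
\begin{equation*}
\sigma_p(y)-s \;=\; \frac{\bigl(\sigma_{x_0}(y)-s\bigr)\,(y-x_0)}{y-p},
\end{equation*}
which follows from writing $f(y)-f(p)=(f(y)-f(x_0))-(f(p)-f(x_0))$ and using the defining relation $f(p)-f(x_0)=s(p-x_0)$. A short sign analysis of each factor---splitting according to whether $y$ lies in $(x_0,p)$, $(p,b)$, $[b,1)$, or $(0,x_0)$, and invoking the established strict monotonicity of $\sigma_{x_0}$ on $(x_0,b)$ together with $\sigma_{x_0}\leq\lambda<s$ off this interval---shows $\sigma_p(y)<s$ throughout a punctured neighborhood of $x_0$. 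Continuity of $\sigma_p$ then gives, for any $v\in(\lambda,s)$ sufficiently close to $s$, two distinct points $y_-<x_0<y_+$ with $\sigma_p(y_-)=\sigma_p(y_+)=v$; the triple $(p,f(p))$, $(y_-,f(y_-))$, $(y_+,f(y_+))$ is then collinear on a line of slope $v>\lambda$, the sought contradiction.

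The main obstacle I anticipate is verifying that the reduction to the ``monotone descent from $+\infty$ to $\lambda$'' picture really exhausts every flavor of non-Lipschitz behavior at $x_0$, including cusp-like singularities where $\sigma_{x_0}$ tends to $+\infty$ on one side and $-\infty$ on the other, or wild bilateral oscillation. In each such scenario either the argument above applies on the ``$+\infty$-side'' after suitable choice of $p$, or the injectivity of $\sigma_{x_0}$ on $\{|\sigma_{x_0}|\geq\lambda\}$ is violated outright (two unbounded sign-consistent components would have overlapping images in $(\lambda,\infty)$), again producing three collinear graph points with steep slope and the contradiction.
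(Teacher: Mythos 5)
Your central mechanism is sound and is essentially the paper's own device in different clothing: the identity $\sigma_p(y)-s=\frac{(\sigma_{x_0}(y)-s)(y-x_0)}{y-p}$ and the resulting strict local maximum of $\sigma_p$ at $x_0$ amount to the paper's observation that $f(x)-\lambda' x$ has a strict interior extremum, so that lowering the secant line slightly produces three intersection points on a steep line; your component/injectivity analysis of $\{\sigma_{x_0}>\lambda\}$ parallels the paper's monotonicity argument (Lemma \ref{mono}). That core, including the final intermediate-value step, would go through once the setup is granted.

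The genuine gap is the very first reduction. Failure of the local Lipschitz property at $x_0$ gives you pairs $u_m\neq v_m$, both tending to $x_0$, with $|S(u_m,v_m)|\to\infty$; it does \emph{not} give you $y_n\to x_0$ with $|\sigma_{x_0}(y_n)|\to\infty$. These are different conditions: $g(y)=y^2\sin(1/y^2)$, $g(0)=0$, is Lipschitz on no neighbourhood of $0$, yet $|\sigma_0(y)|=|y\sin(1/y^2)|\le|y|$ stays bounded. (This $g$ of course violates the proposition's hypothesis, but that is exactly what would have to be proved, and your argument never touches this case.) The failure modes you list in your closing paragraph --- cusps, bilateral oscillation --- all presuppose that $\sigma_{x_0}$ itself is unbounded near $x_0$, so the dichotomy you offer does not cover the scenario in which every $\sigma_x$ is bounded near $x$ but the bounds are not locally uniform. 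As written, your argument establishes only that $f$ is \emph{pointwise} Lipschitz at each point, which is strictly weaker than locally Lipschitz. The paper bridges precisely this gap by upgrading the pointwise information to a structural statement --- $f$ is convex or concave on the pieces where steep secants occur and $\lambda$-Lipschitz in between --- and then invoking Lemma \ref{lip}. You need an analogous step: either show that, under the hypothesis, steep secants $S(u_m,v_m)\to\infty$ with $u_m,v_m\to x_0$ force some $\sigma_{u_m}$ (or $\sigma_{x_0}$) to be unbounded, or extract the convexity structure as the paper does.
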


\begin{figure}
	\centering
	\includegraphics[scale=0.15]{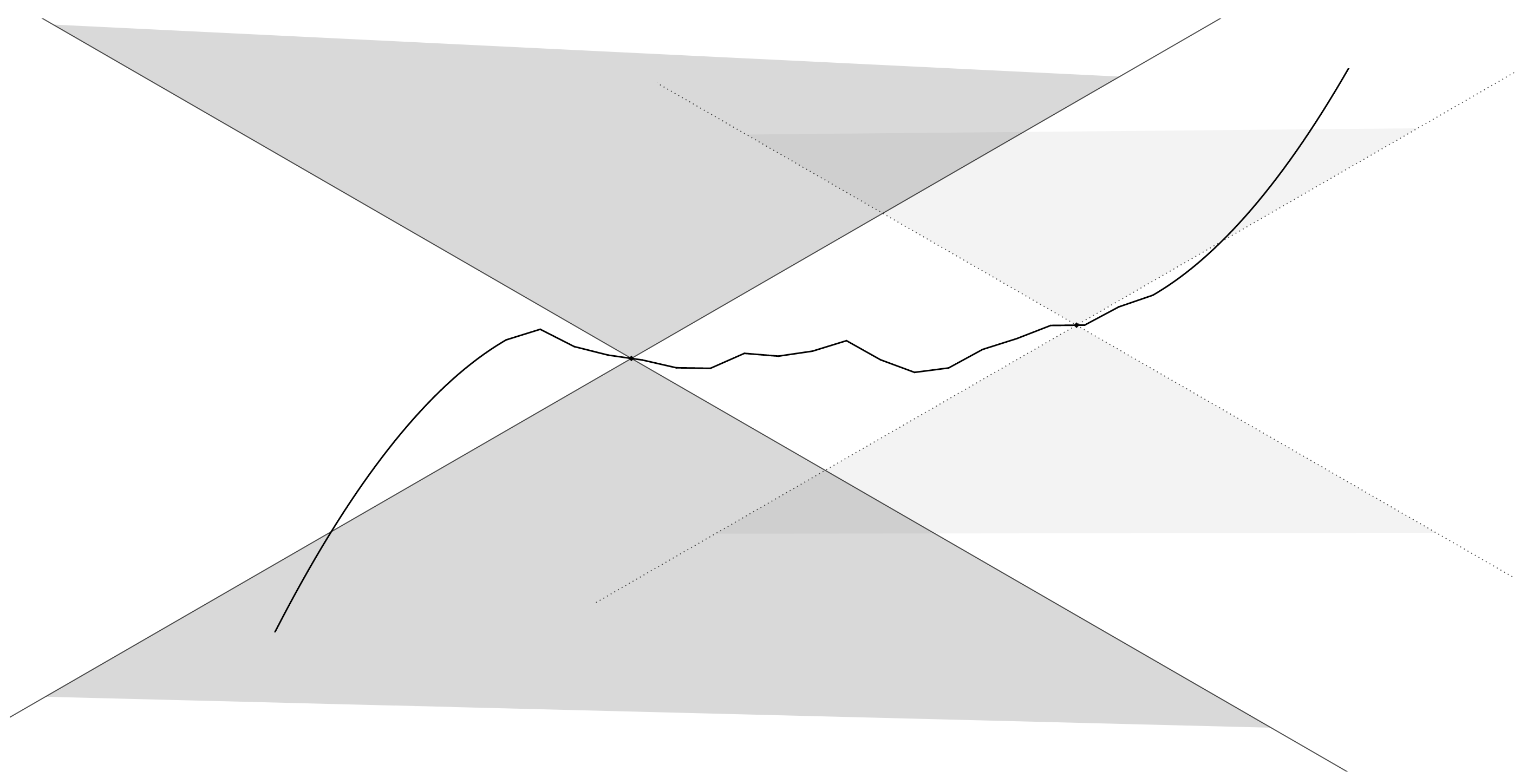}
	\caption{Each line from any cone intersects the graph at at most two points.}
	\label{cone}
\end{figure}

Notice that our hypothesis implies that no three points of the graph of $f$ can lie on the same line that is a parallel shift of a line from a given cone.

For the proof we will need the following lemmas.

\begin{lem} \label{lip}
	Every convex (or concave) function on an open interval is locally Lipschitz.
\end{lem}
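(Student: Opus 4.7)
The plan is to invoke the classical three-slopes (or three-chord) inequality for convex functions: if $f$ is convex on an open interval and $x < y < z$ lie in it, then
\[ \frac{f(y) - f(x)}{y - x} \;\leq\; \frac{f(z) - f(x)}{z - x} \;\leq\; \frac{f(z) - f(y)}{z - y}. \]
This is a direct rearrangement of the defining convexity inequality $f(y) \leq \tfrac{z-y}{z-x} f(x) + \tfrac{y-x}{z-x} f(z)$, and it encodes the monotonicity of secant slopes that will do all the work.

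To establish the local Lipschitz property, fix $x_0 \in (a,b)$ and pick auxiliary points $a < c' < c < d < d' < b$ with $x_0 \in (c,d)$. For any $x < y$ in $[c,d]$, two applications of the three-slopes inequality — first to the triples $(c', c, x)$ and $(c', x, y)$, and dually to $(x, y, d')$ and $(y, d, d')$ — sandwich the secant slope:
\[ \frac{f(c) - f(c')}{c - c'} \;\leq\; \frac{f(y) - f(x)}{y - x} \;\leq\; \frac{f(d') - f(d)}{d' - d}. \]
Both bounding slopes are independent of $x$ and $y$, so letting $L$ be the larger of their absolute values yields $|f(y)-f(x)| \leq L\,|y - x|$ for all $x, y \in [c,d]$. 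Since $x_0$ was arbitrary, $f$ is locally Lipschitz on $(a,b)$; the concave case follows by applying the result to $-f$.

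I do not anticipate a real obstacle. The whole argument rests on the three-slopes inequality, which is a one-line consequence of convexity. The only item requiring care is the choice of the enclosing pair $c' < d'$ strictly outside $[c,d]$, which is exactly what guarantees that \emph{every} secant slope with endpoints in $[c,d]$ is pinched between the two fixed outer slopes; once these enclosing points are chosen, the Lipschitz bound drops out immediately from the two chained applications of three-slopes on each side.
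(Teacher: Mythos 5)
Your proof is correct: the three-slopes inequality argument, with the enclosing points $c'<c<d<d'$ pinching every secant slope over $[c,d]$ between two fixed outer slopes, is the standard and complete proof of this classical fact. The paper itself states this lemma without proof, treating it as well known, so your write-up simply supplies the expected textbook argument.
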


\begin{lem} \label{mono}
	If a function $g\colon(0,1)\to\R$ is continuous and has a unique local extremum, $\1x,$ inside $(0,1),$ then it is strictly monotone in $(0,\1x]$ and $[\1x,1)$ with opposite monotonicity on each interval.
\end{lem}

\begin{proof}[Proof of Lemma \ref{mono}]
Suppose $\1x$ is a local minimum for $g$. We will show that~$g$ is strictly monotone increasing in $[\1x,1)$. Assume the contrary, i.e., consider two points $x_1<x_2\in[\1x,1)$ such that $g(x_1)\geq g(x_2)$. On the compact interval $[x_1,x_2]$, the function~$g$ has to attain a minimum and a maximum, which respectively are at $x_1$ and $x_2$ otherwise the uniqueness of $\1x$ is contradicted. If $x_1=\1x$, the point $\1x$ is not a local minimum and so $\1x<x_1$. Again, $\1x$ and $x_1$ must be the minimum and maximum, respectively, of $g$ in $[\1x,x_1]$, which in turn says $x_1$ is a local maximum contradicting the uniqueness of $\1x$. Therefore, $g(x_1)<g(x_2)$ and $g$ is strictly monotone increasing on $[\1x,1)$. Similarly, on $(0,\1x]$ $g$ is (strictly) monotone decreasing and the same arguments work for when $\1x$ is local maximum.
\end{proof}

\begin{proof}
Consider the slope function of $f$, $S(x,y)=\frac{f(x)-f(y)}{x-y}$, and note that
\[S(x,y)=\frac{f(x)-f(y)}{x-y}=\zeta \iff f(x)-\zeta x=f(y)-\zeta y.\]
If for any two points $x<y\in(0,1)$ we have $|S(x,y)|<\lambda$, then $f$ is Lipschitz (with Lipschitz constant at most $\lambda$).

Now suppose that there exist $x_0,y_0\in(0,1)$ for which $|S(x_0,y_0)|\geq \lambda$ and consider the case where $S(x_0,y_0)=\lambda'\geq \lambda$. Since $S(x,y)=S(y,x)$, we may assume that $x_0<y_0$. We will denote the line passing through $(x_0,f(x_0))$ and $(y_0,f(y_0))$ by $\epsilon_{\lambda'}$.

\begin{figure}
	\centering
	\includegraphics[scale=0.15]{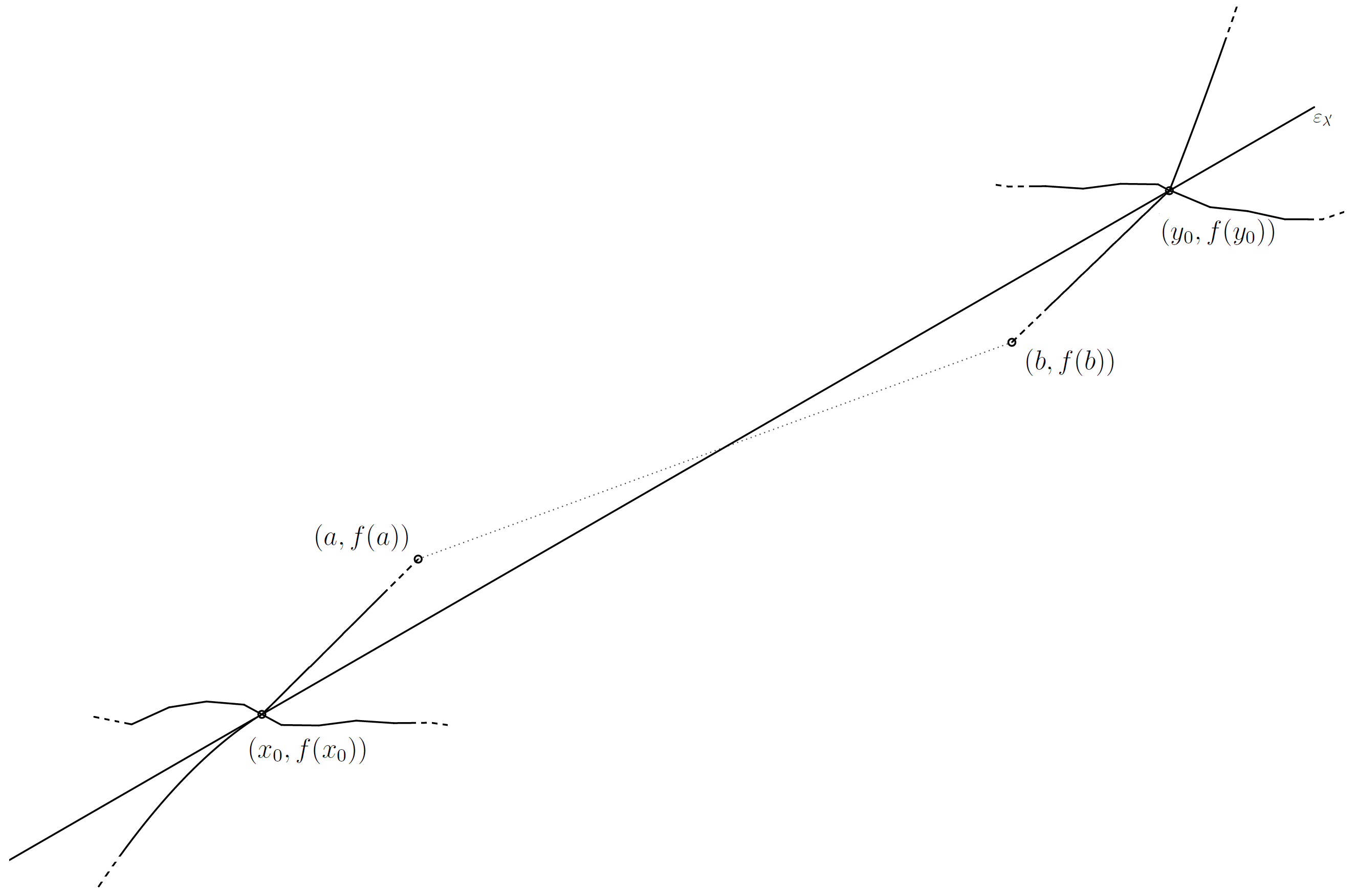}
	\caption{$S(x_0,y_0)=\lambda'\geq \lambda$; The part of the graph of $f$ between $x_0$ and $y_0$ cannot lie on different sides of $\epsilon_{\lambda'}$.}
	\label{constantly_slope}
\end{figure}

If there are numbers $x_0<a<b<y_0$ such that 
\[(S(x_0,a)-\lambda')(S(x_0,b)-\lambda')\leq0,\]
then by the continuity of $S(x,\var)$ there has to exist a number $c\in[a,b]$ such that $\frac{f(x_0)-f(c)}{x_0-c}=\lambda'=\frac{f(x_0)-f(y_0)}{x_0-y_0}$. But this means that $(x_0,f(x_0)), (c,f(c))$ and $(y_0,f(y_0))$ are colinear, which contradicts our hypothesis and therefore $S(x_0,y)$ has to be constantly greater or constantly less than $\lambda'$ for $x_0<y<y_0$ (see Figure~\ref{constantly_slope}). For the same reasons $S(x_0,y)$ has to be constantly greater or constantly less than $\lambda'$ also for $y>y_0$ and the same holds for $S(x,y_0)$ for $x<x_0$.

Graphically, this means that $\epsilon_{\lambda'}$ separates $f$ in three parts that do not intersect $\epsilon_{\lambda'}$; one before $x_0$, one over $(x_0,y_0)$, and one after $y_0$. We proceed to show that the part over $(x_0,y_0)$ lies on a different side of $\epsilon_{\lambda'}$ from the other two.

\begin{figure}
	\begin{minipage}{0.5\linewidth}
		\includegraphics[width=\linewidth, height=3.5cm]{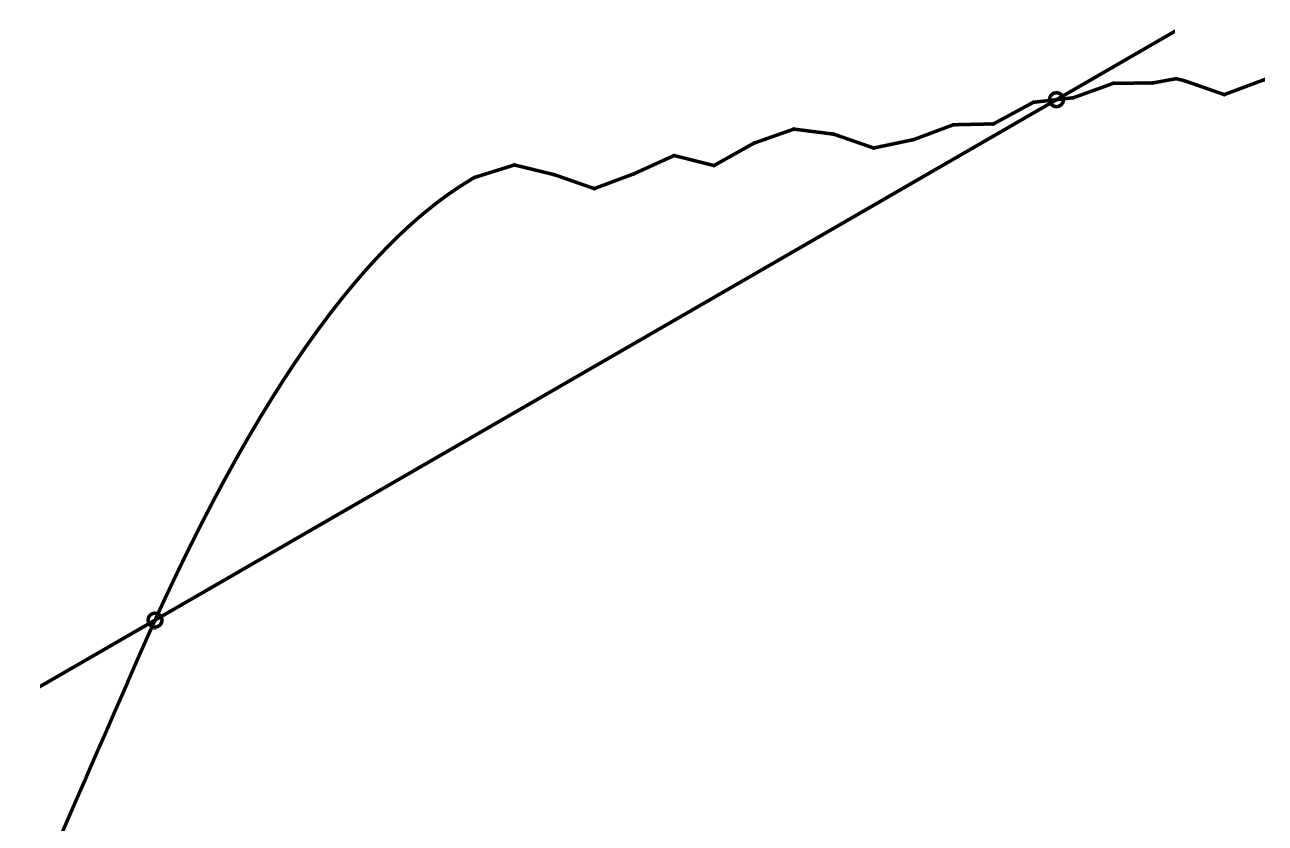}
		\subcaption*{$S(x_0,y)> \lambda'$}
	\end{minipage}
	\begin{minipage}{0.5\linewidth}
		\includegraphics[width=\linewidth, height=3.5cm]{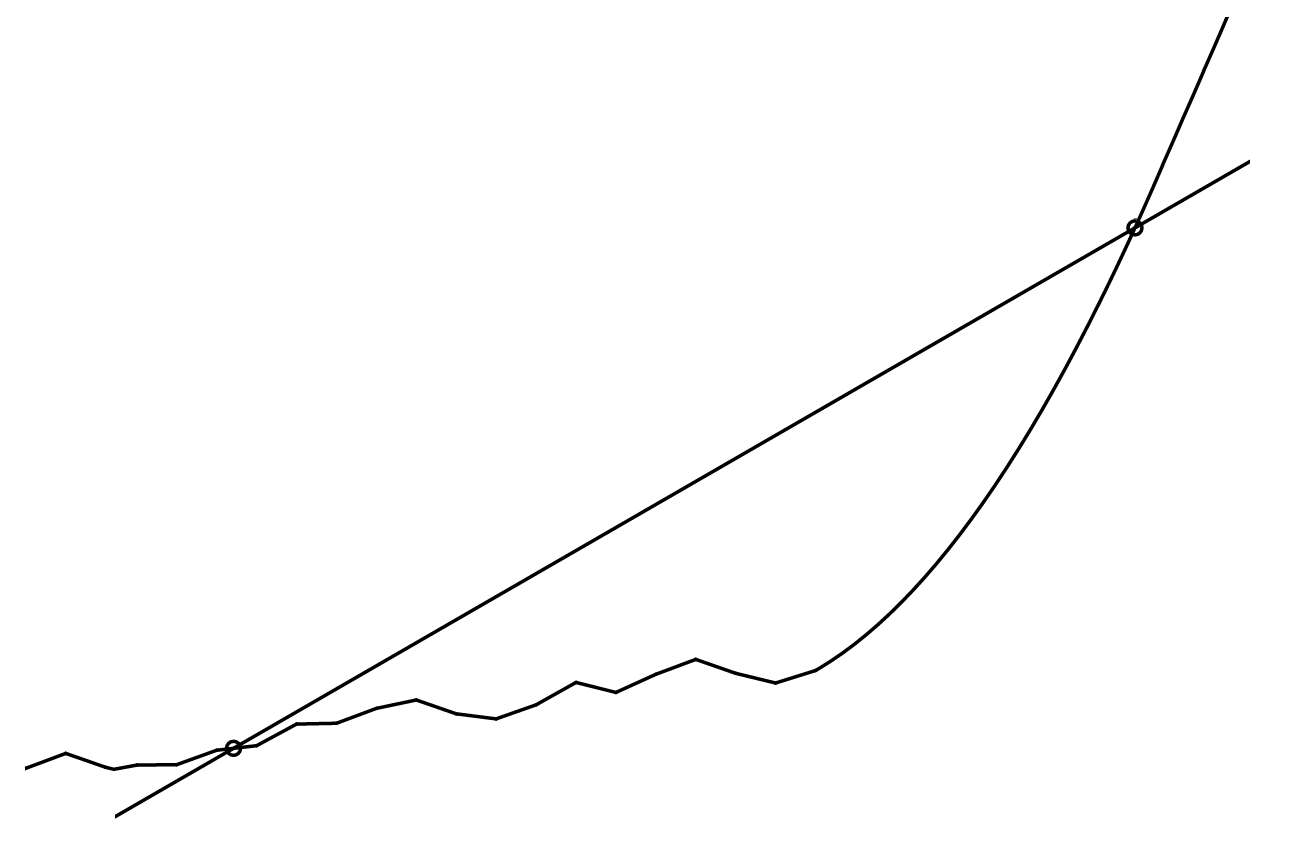}
		\subcaption*{$S(x_0,y)< \lambda'$}
	\end{minipage}
	\caption{The two cases when $x_0<y<y_0$.}
	\label{cases}
\end{figure}

Let us consider the case when $S(x_0,y)<\lambda'$ for $x_0<y<y_0$. Then, the function $f(x)-\lambda'x$ defined on $[x_0,y_0]$ attains a maximum at $x_0$ and at $y_0$ (which also implies that $S(x,y_0)>\lambda'$ for $x_0<x<y_0$) and let $\1y\in(x_0,y_0)$ be the point where $f(x)-\lambda'x$ attains a minimum (see Figure~\ref{no_going_back}). Now, suppose additionally that $S(x_0,y)<\lambda'$ also for $y>y_0$.

\begin{figure}
	\centering
	\includegraphics[scale=0.3]{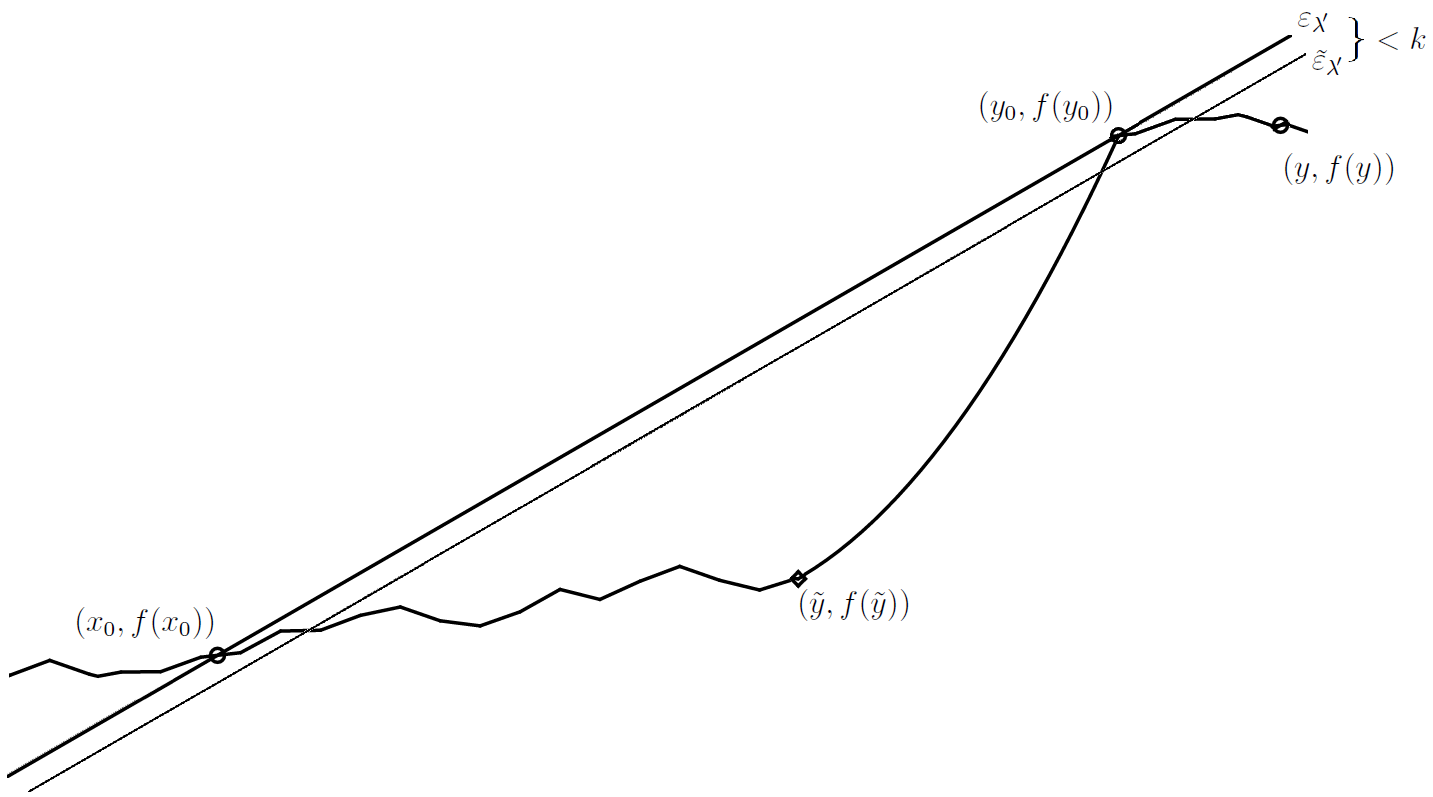}
	\caption{If $S(x_0,y)<\lambda'$ for every $y\notin(x_0,1)\setminus\{y_0\}$, by moving the line $\epsilon_{\lambda'}$ slightly down, we get three points of intersection.}
	\label{no_going_back}
\end{figure}

Pick a number $k$ with $f(x_0)-\lambda'x_0>k>\max\{f(\1y)-\lambda'\1y,f(y)-\lambda'y\}$ for some $y>y_0$. Then, we have simultaneously
\begin{align*}
f(\1y)-\lambda'\1y<k<f(x_0)-\lambda'x_0,\\
f(\1y)-\lambda'\1y<k<f(y_0)-\lambda'y_0,\\
f(y)-\lambda'y<k<f(y_0)-\lambda'y_0.
\end{align*}
The continuity of $f$ and the above inequalities imply that there must exist numbers $a,b$, and $c$ in $(x_0,\1y),(\1y,y_0)$, and $(y_0,y)$ respectively such that 
\[
f(a)-\lambda'a=f(b)-\lambda'b=f(c)-\lambda'c\ =k
\]
 which implies that $(a,f(a)),(b,f(b))$, and $(c,f(c))$ are colinear, a contradiction, and therefore $S(x_0,y)$ has to be greater than $\lambda'$ for $y>y_0$. Working similarly, we see that $S(x,y_0)<\lambda'$ for $x<x_0$.

An identical argument gives us that $\1y$ is the only point in $[x_0,y_0]$, and eventually in $[x_0,1)$, where $f(x)-\lambda'x$ attains a local minimum (see Figure~\ref{unique_min}) and from Lemma \ref{mono} we deduce that $f(x)-\lambda'x$ has to be monotone increasing in $[\1y,1)$. Hence, for any $x,y\geq\1y$ we have:
\[
x<y\iff f(x)-\lambda'x<f(y)-\lambda'y\xiff{x<y} S(x,y)>\lambda'.
\]

\begin{figure}
	\centering
	\includegraphics[scale=0.2]{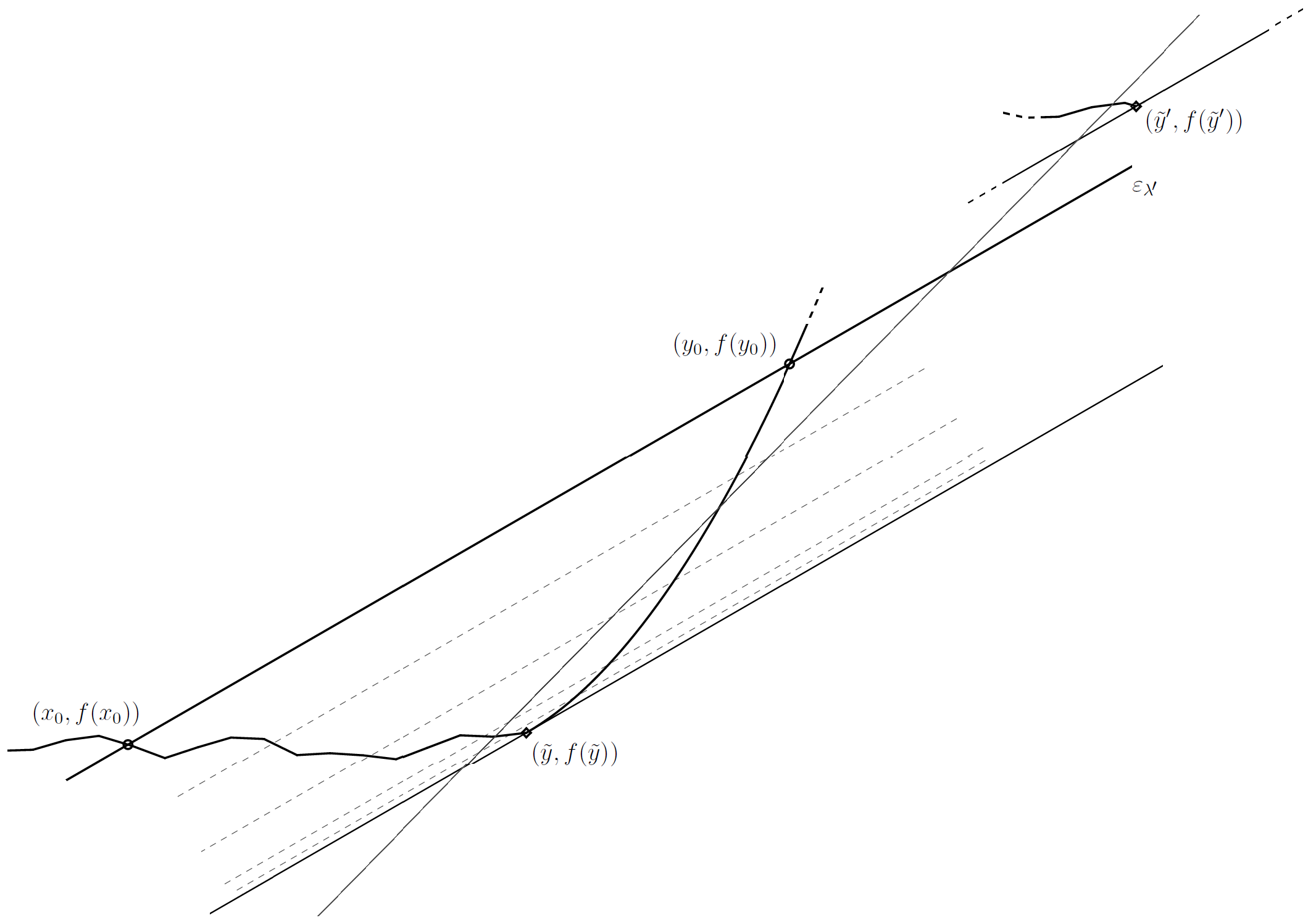}
	\caption{If $f$ attains a local minimum at another point $\1y'>\1y$, we can find a line of slope greater than $\lambda'$ intersecting $f$ at three points.}
	\label{unique_min}
\end{figure}

However, observe that for any $x$ and $y$ for which $S(x,y)>\lambda'$, the function $S(x,\var)$ has to be 1-1 otherwise our hypothesis fails in a similar way as above and, since it is continuous, it has to be monotone in $(x,1)$ for every $x\in[\1y,1)$. Therefore, $f$ is either convex or concave in $[\1y,1)$ and thus locally Lipschitz in $(\1y,1)$ thanks to Lemma \ref{lip}.

In particular, $f$ has to be convex in $[\1y,1)$. Indeed, assume $f$ is concave and let~$x$ be any number in $(\1y,y_0)$, see Figure~\ref{no_concave}. By concavity, the point $(\1y,f(\1y))$ has to lie below the line passing through $(y_0,f(y_0))$ with slope $\zeta=S(x,y_0)$ and, since $\zeta=S(x,y_0)>S(x_0,y_0)=\lambda'\geq \lambda$, the point $(x_0,f(x_0))$ lies above. Hence, this line will intersect the graph of $f$ at some point $(c,f(c))$ with $c\in(x_0,\1y)$ and the points $(c,f(c)),(x,f(x))$, and $(y_0,f(y_0))$ are colinear, a contradiction.

\begin{figure}
	\centering
	\includegraphics[scale=0.15]{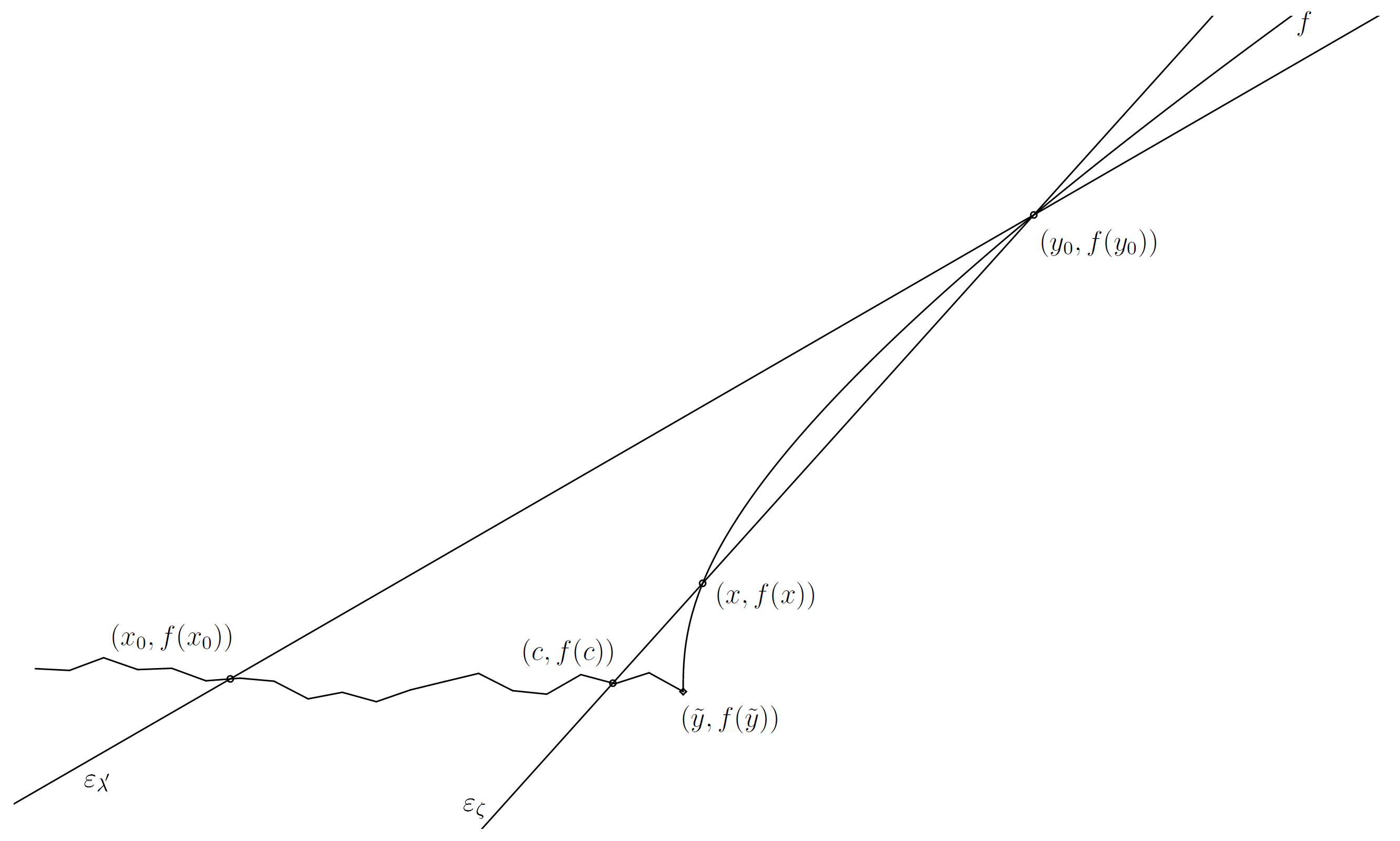}
	\caption{$S(x_0,y)$ has to be strictly monotone increasing in $(y_0,1)$.}
	\label{no_concave}
\end{figure}

If we instead assume that $S(x_0,y)>\lambda'$ for $x_0<y<y_0$, working similarly we conclude that there must exist $\1y\in[x_0,y_0]$ such that $f$ is concave in $(0,\1y]$.

The case when there exist $x_0,y_0\in(0,1)$ for which $S(x_0,y_0)=\lambda'\leq -\lambda$ is identical and gives us the reverse implications.

To sum up, we conclude that there are points $\1x,\1y\in(0,1)$ such that $f$ has some particular convexity on $(0,\1x]$ and on $[\1y,1)$. These intervals cannot overlap, because otherwise $f$ would be a line segment of slope at least $\lambda$ (or at most $-\lambda$) on $[\1y,\1x]$, which contradicts our hypothesis and so $\1x\leq\1y$. Let $\1 x$ be the maximal point so that $f$ is, for instance, convex on $(0,\1 x]$, and $\1 y$ the minimal so that $f$ is convex on $[\1 y,1)$. When $\1x\neq\1y$, for every points $x,y\in[\1x,\1y]$ we have $|S(x,y)|\leq \lambda$ and $f$ is Lipschitz in $[\1x,\1y]$ with Lipschitz constant $\lambda$.

This concludes the proof.
\end{proof}

Of course, any continuous function that satisfies the condition of the proposition and has different convexity on $(a,\1x]$ and on $[\1y,b)$ has to additionally satisfy $\lim_{x\to a^+,y\to b^-}|S(x,y)|<\lambda$.

Furthermore, notice that the fact that the cone is vertical (or at least that it contains the vertical line) is essential to get the locally Lipschitz property. Indeed, if~$C$ is a cone avoiding the vertical line, we can restrict the function~$\sqrt[3]{x}$ to a sufficiently small interval around $0$ so that it intersects all the lines of the cone at at most two points. But $\sqrt[3]{x}$ is clearly not Lipschitz around 0. However, we do have the following corollary.

\begin{figure}
	\centering
	\begin{minipage}{0.49\linewidth}
		\frame
		{\includegraphics[width=\linewidth, height=3cm]{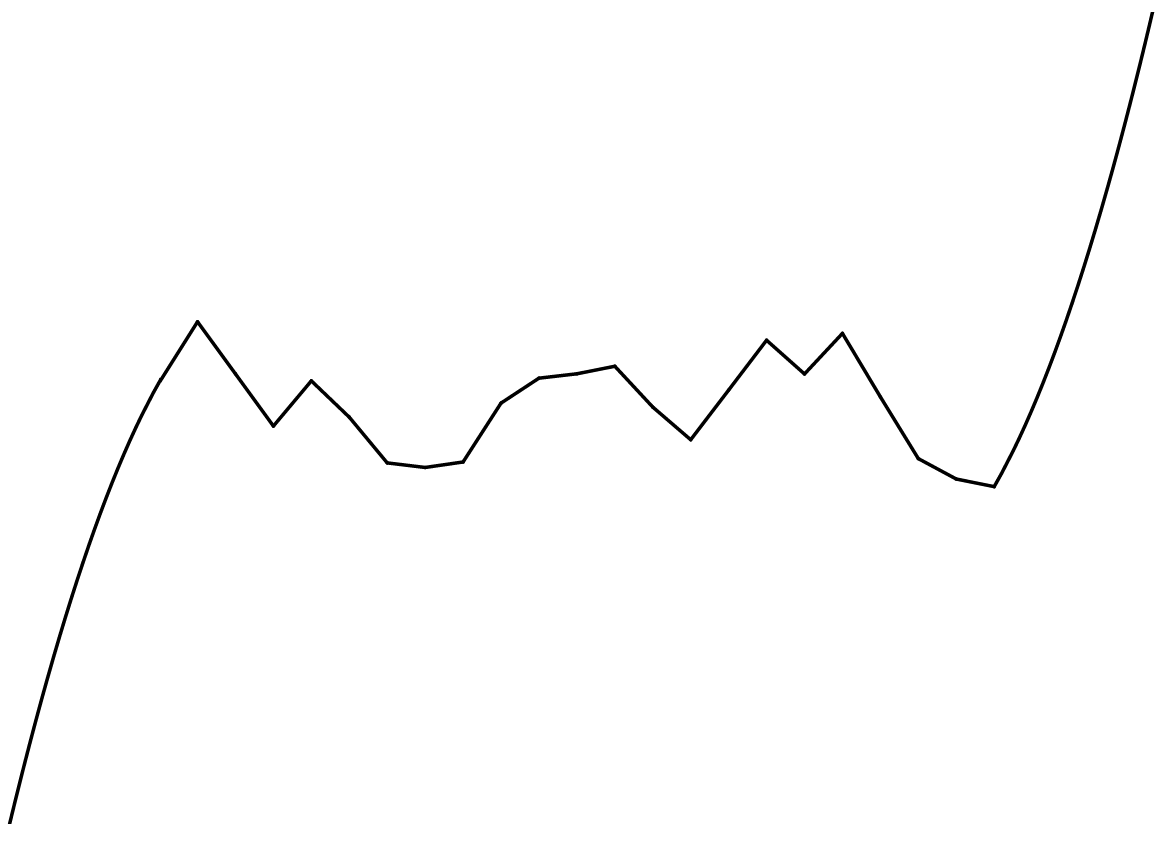}}
	\end{minipage}
	\begin{minipage}{0.49\linewidth}
		\frame
		{\includegraphics[width=\linewidth, height=3cm]{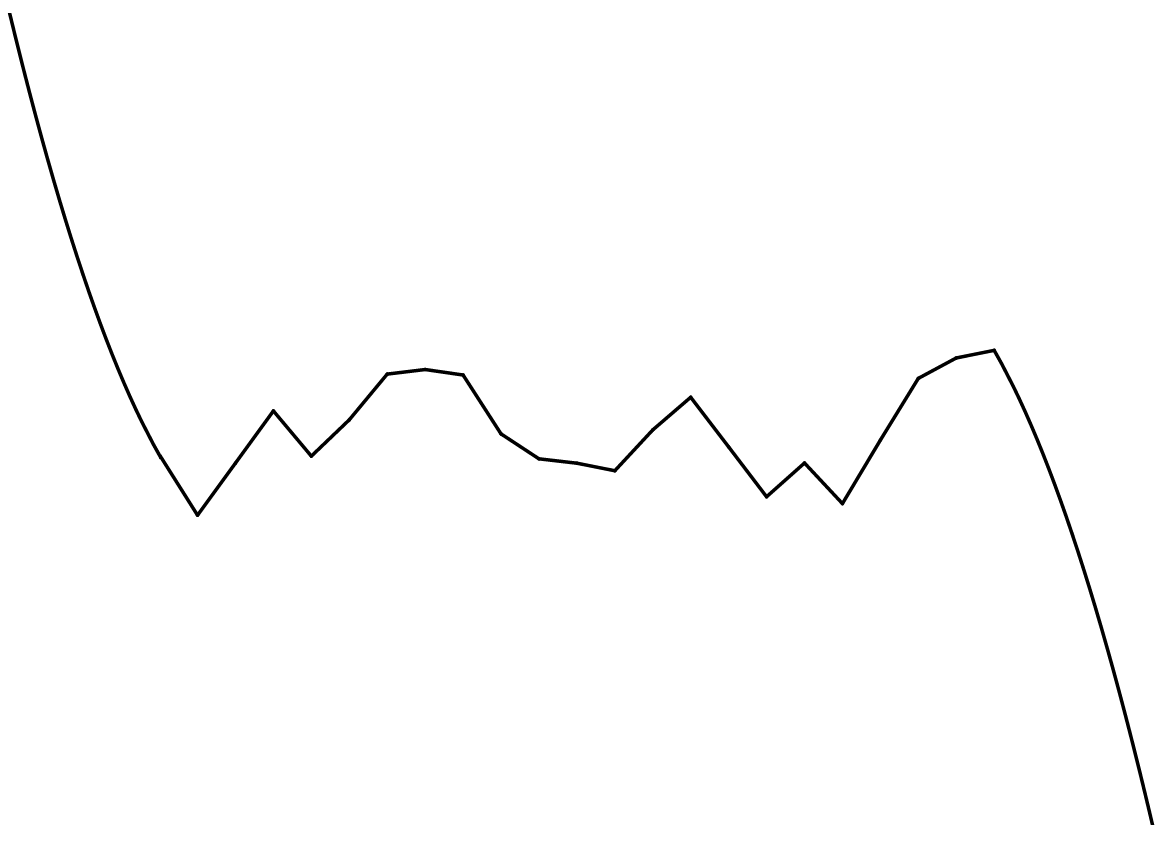}}
	\end{minipage}
	\begin{minipage}{0.49\linewidth}
		\frame
		{\includegraphics[width=\linewidth, height=3cm]{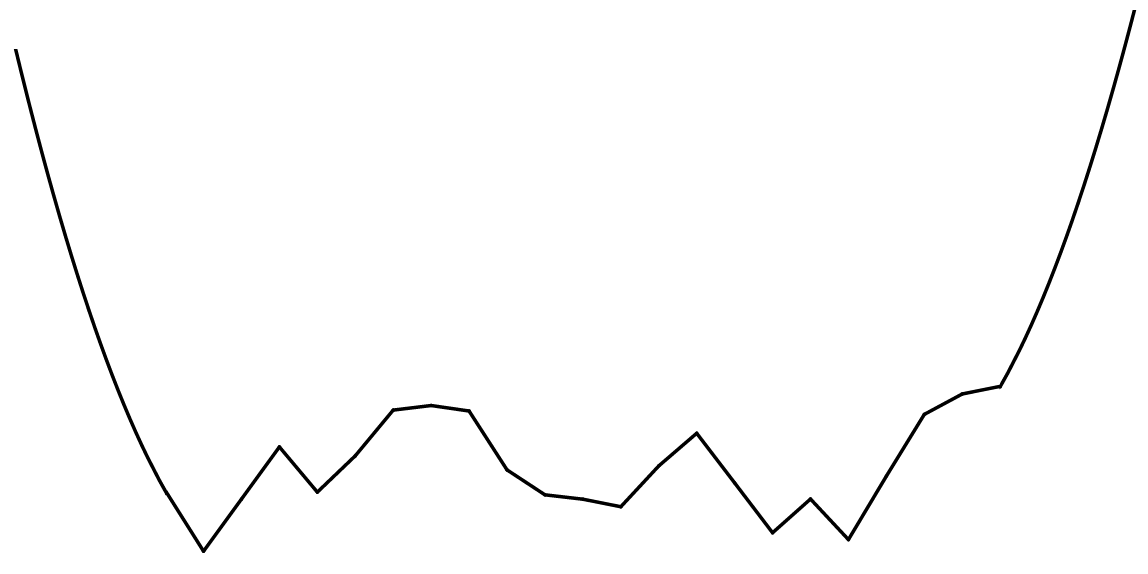}}
	\end{minipage}
	\begin{minipage}{0.49\linewidth}
		\frame
		{\includegraphics[width=\linewidth, height=3cm]{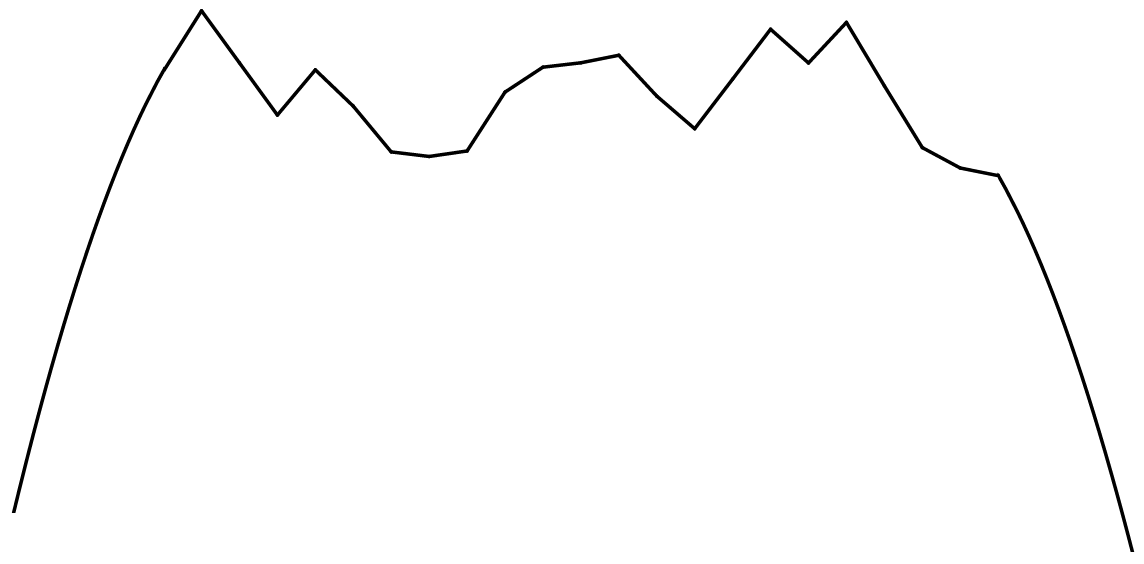}}
	\end{minipage}
	\caption{All the possible ways the graph of $f$ can look like.}
	\label{possibilities}
\end{figure}

\begin{cor*}
	Let $\lambda_1>0>\lambda_2$ be some fixed numbers and consider all the cones of lines with slopes between $\lambda_1$ and $\lambda_2$ (containing the vertical line). If $f\colon(0,1)\to\R$ is a continuous function satisfying the same condition as above, then it is locally Lipschitz.
\end{cor*}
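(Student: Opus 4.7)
The plan is to reduce the corollary directly to Proposition~\ref{2D} by choosing a symmetric subcone inside the asymmetric one.

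Set $\lambda := \max(\lambda_1, -\lambda_2) > 0$. I claim that the symmetric cone of lines whose slopes lie outside $(-\lambda, \lambda)$ (including the vertical line), which is the cone appearing in Proposition~\ref{2D}, is contained in the corollary's cone of lines with slopes outside $(\lambda_2, \lambda_1)$. Indeed, any line in the symmetric cone either has slope $\geq \lambda \geq \lambda_1$ or slope $\leq -\lambda \leq \lambda_2$ (or is vertical), so it lies in the corollary's cone as well.

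Since the hypothesis of the corollary says that every line in the corollary's cone meets the graph in at most two points, the same is true \emph{a fortiori} for every line in the smaller symmetric cone associated with $\lambda$. Therefore the hypothesis of Proposition~\ref{2D} is satisfied with this choice of $\lambda$, and we conclude that $f$ is locally Lipschitz.

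I expect no real obstacle here: the whole content of the corollary is the simple geometric observation that an asymmetric two-sided cone about the vertical always contains a symmetric two-sided cone about the vertical (with the bigger of the two slope-gaps), so the more delicate Proposition~\ref{2D} does all the work.
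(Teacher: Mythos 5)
Your reduction is correct: with $\lambda:=\max(\lambda_1,-\lambda_2)$ the symmetric family of lines (slope $\geq\lambda$, slope $\leq-\lambda$, or vertical) is indeed contained in the asymmetric family (slope $\geq\lambda_1$, slope $\leq\lambda_2$, or vertical), since $\lambda\geq\lambda_1$ and $-\lambda\leq\lambda_2$; so the hypothesis of Proposition~\ref{2D} holds with this $\lambda$ and the conclusion follows at once. This is a genuinely different route from the paper, which instead re-runs the whole argument of Proposition~\ref{2D} with the asymmetric thresholds (replacing $|S(x,y)|<\lambda$ by $\lambda_2<S(x,y)<\lambda_1$ throughout). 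Your version is shorter and uses the proposition strictly as a black box; the paper's version is slightly more informative in that, by keeping the two thresholds separate, it tracks which convexity regime is triggered by which side of the cone and records the Lipschitz constant $\max(\lambda_1,-\lambda_2)$ on the middle region directly --- but since the corollary asserts only local Lipschitzness (and your choice of $\lambda$ yields the same constant there anyway), nothing is lost. One small point worth making explicit if you write this up: the ``cones'' in both statements are based at arbitrary vertices, so the containment must be checked vertex by vertex; it holds trivially because the inclusion of slope sets is independent of the vertex.
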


\begin{proof}
	The inequalities $|S(x,y)|<\lambda$ and $|S(x,y)|\geq \lambda$ in this case correspond to $\lambda_2<S(x,y)<\lambda_1$ and $S(x,y)\geq \lambda_1\ or\ S(x,y)\leq \lambda_2$, respectively. The proof is the same as before and on the regions where $f$ is not convex or concave it is Lipschitz with Lipschitz constant the maximum of $\lambda_1$ and $-\lambda_2$.
\end{proof}

\begin{rem*}
	All the above remains true for any interval $(a,b)$. It is not hard to see that the same proof also works in the case where $f$ is defined on a closed interval, but Lemma \ref{lip} cannot be used in this setting. However, if $f\colon[0,1]\to\R$, its restriction $f\2{(0,1)}$ is locally Lipschitz.
\end{rem*}

\section{An example}

It is natural then to ask whether our assumption still gives us the locally Lipschitz property when we allow more points of intersection. It turns out this fails even for at most 3 points of intersection in the sense that there can be infinitely many points around where the function cannot be locally Lipschitz. Here, we construct such a function whose graph intersects a certain cone of lines at at most three points.

\bigskip

Consider the sequence $a_k=\frac{1}{2}-\frac{1}{2^k}$ for $k\geq1$, and on the each of the intervals $[a_k,a_{k+1}]$ define a continuous function $f_k$ with the following properties:
\begin{enumerate}[i)]
	\item $f_1(0)=0$, $f_1(\frac{1}{4})=f_2(\frac{1}{4})=\frac{\lambda}{4}$;
	\item $f_{k+1}(a_{k+1})=f_k(a_{k+1})$; \label{contin}
	\item $f_k(a_{k+1})=\frac{1}{2}\left(f_k(a_k)+f_{k-1}(a_{k-1})\right)$; \label{aver}
	\item $f_{2k}$ is monotone decreasing and convex on $[a_{2k},a_{2k+1}]$ and $f_{2k-1}$ is monotone increasing and concave on $[a_{2k-1},a_{2k}]$; \label{concav}
	\item the tangent line to $f_k$ at $(a_k,f_k(a_k))$ is vertical.
\end{enumerate}

Let $f\colon[0,1]\to\R$ be the function given by
\[
f(x)=\begin{cases}
f_k(x) & \afterline{if} x\in[a_k,a_{k+1}),\\
f_k(1-x) & \afterline{if} x\in(1-a_{k+1},1-a_k],\\
\frac{\lambda}{6} & \afterline{if} x=\frac{1}{2}
\end{cases}
\]
for all $k\geq 1$ (Figure~\ref{bat}), which is clearly continuous in $(0,1)\setminus\{\frac{1}{2}\}$ because of \eqref{contin}. Observe that the sequence $(b_k)=(f_k(a_k))$ is recursively defined by $b_{k+1}=\frac{b_k+b_{k-1}}{2}$ (through property \eqref{aver}) and it converges. In particular, we have $\frac{b_{k+1}-b_k}{b_k-b_{k-1}}=-\frac{1}{2}$ and therefore
\begin{equation}\label{sequen}
b_{k+1}=b_k+\Big(\frac{-1}{2}\Big)^{k-1}(b_2-b_1)\implies b_{k+1}=b_2-\frac{1}{3}\bigg(1-\Big(\frac{-1}{2}\Big)^{k-1}\bigg)(b_2-b_1).
\end{equation}
In our case, we have $b_1=f_1(0)=0$, $b_2=f_2(\frac{1}{4})=\frac{\lambda}{4}$, and also 
\[
{f_k(a_k)=\frac{\lambda}{6}\bigg(1-\Big(\frac{-1}{2}\Big)^{k-1}\bigg)},
\]
 hence $\lim_{k\to+\infty}f_k(a_k)=\frac{\lambda}{6}$. But note that for every $x\in(0,\frac{1}{2})$ there is an $n\geq1$ for which $x\in[a_n,a_{n+1})$ and, since each $f_k$ is monotone in $[a_k,a_{k+1})$ for every~$k$, we get 
\[
\min\big\{f_n(a_n),f_{n+1}(a_{n+1})\big\}\leq f(x)\leq\max\big\{f_n(a_n),f_{n+1}(a_{n+1})\big\}.
\]
Therefore, we have $\lim_{x\to\frac{1}{2}^-}f(x)=\frac{\lambda}{6}=f(\frac{1}{2})$, and similarly for $x\in(\frac{1}{2},1)$, which means that $f$ is also continuous at $\frac{1}{2}$.

However, by construction $f$ is locally Lipschitz on $(0,1)\setminus\{\frac{1}{2}\}$ except at around $a_k$ and $1-a_k$, $k\geq1$, and therefore it is not locally Lipschitz around $\frac{1}{2}$ either, because $a_k\to\frac{1}{2}$ as $k\to+\infty$.

\begin{figure}
	\centering
	\includegraphics[width=\textwidth]{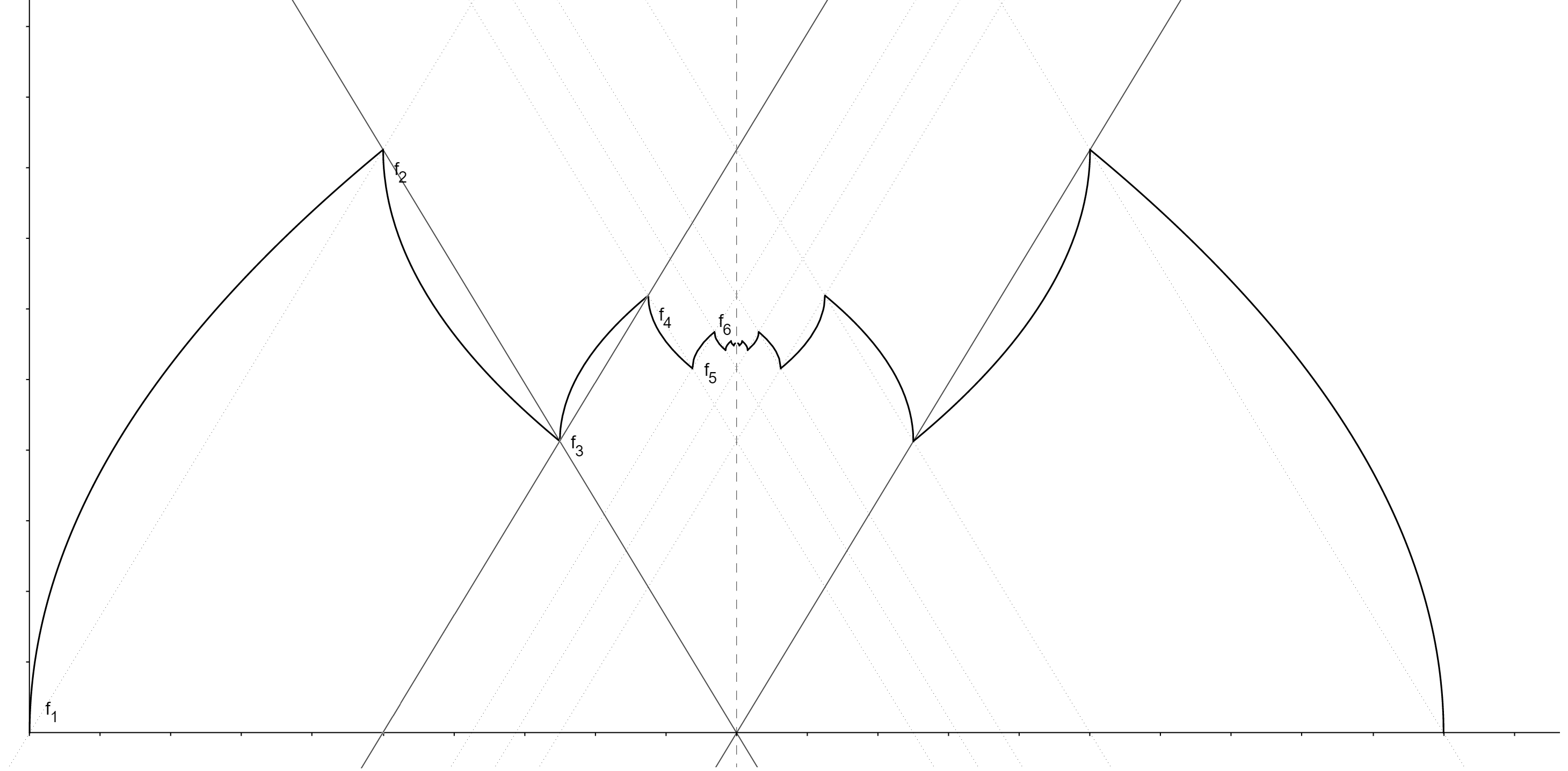}
	\caption{At most $3$ points of intersection with any line inside the cones}
	\label{bat}
\end{figure}

Now we proceed to show the graph of $f$ has at most 3 intersection points with any line inside a vertical cone with slopes between $\lambda$ and $-\lambda$.

Each $f_k$ is monotone and has certain concavity on $[a_k,a_{k+1}]$, hence its graph is contained inside the triangle $T_k$ with vertices $(a_k,f(a_k))$, $(a_{k+1},f_{k+1}(a_{k+1}))$, and $(a_k,f(a_{k+1}))$ (see Figure~\ref{triangles}) and therefore any line intersecting the graph of $f$ (at at least two points) has to pass through some of these triangles. Notice, however, that if a line passes through two nonconsecutive triangles, say $T_k$ and $T_{k+j}$ $(j>1)$, then it falls outside the admissible cone of lines. In particular, (because of properties \eqref{contin} through \eqref{concav}) each $T_{k+1}$ is half the size of $T_k$ and they are placed is such a way that the maximum and minimum slope a line through them can have are respectively the maximum and the minimum of the quantities
\[\frac{f_{k+j}(a_{k+j})-f_k(a_k)}{a_{k+j}-a_k}\inline{and}\frac{f_{k+j}(a_{k+j+1})-f_k(a_{k+1})}{a_{k+j}-a_{k+1}},\]
when one of the numbers $k$ and $k+j$ is even and the other is odd, and the maximum and minimum of the quantities
\[\frac{f_{k+j}(a_{k+j+1})-f_k(a_k)}{a_{k+j}-a_k}\inline{and}\frac{f_{k+j}(a_{k+j})-f_k(a_{k+1})}{a_{k+j}-a_{k+1}},\]
when $k$ and $k+j$ are both even or both odd. Using \eqref{sequen} we can see that each of the above is bounded in absolute value by $\lambda$ whenever $j>1$.

For the same reasons any admissible line passing through $(\frac{1}{2},\frac{\lambda}{6})$ intersects the graph only at that point, because
\[
\left|\frac{f_k(a_k)-\frac{\lambda}{6}}{a_k-\frac{1}{2}}\right|=\frac{\lambda}{3}<\lambda.
\]

Therefore, the admissible lines intersecting the graph necessarily pass through two (or maybe only one) consecutive triangles and each such line intersects the graph of $f_k$ at at most two points because of \eqref{concav}. Furthermore, due to the difference in concavity of $f_k$ and $f_{k+1}$, a line cannot intersect both of their graphs at two points, because then it would need to have both negative and positive slope, which is absurd.

\begin{figure}
	\centering
	\includegraphics[width=\textwidth]{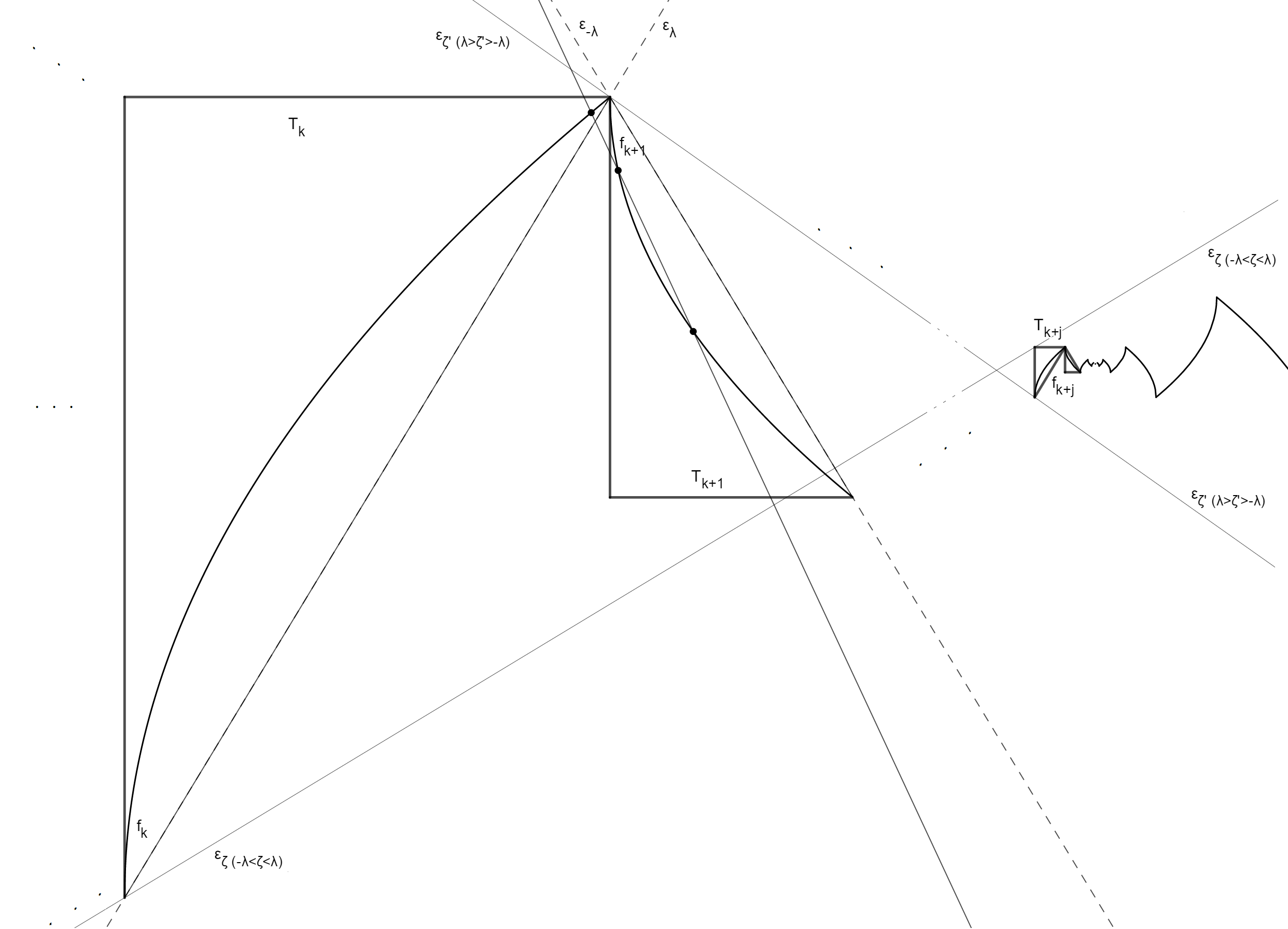}
	\caption{The case when $k$ and $k+j$ are both odd.}
	\label{triangles}
\end{figure}

An example of a sequence $(f_k)$ of functions with the above properties is the following:
\[f_k(x)=\frac{\lambda}{6}\bigg(1-\Big(\frac{-1}{2}\Big)^{k-1}\bigg)+\frac{(-1)^{k+1}\lambda}{2^{\frac{k+1}{2}}}\sqrt{x-a_k}.\]

\section{Hausdorff measure}

Marstrand in \cite[Theorem 6.5.III]{JM} proved that if a Borel set on the plane has the property that
\begin{tequation} \label{Mar}
	if the lines in a positive measure of directions intersect this Borel set at a set of Hausdorff dimension zero, then the Hausdorff dimension of this Borel set is at most~$1$.
\end{tequation}
In particular, this happens if the intersections are at most countable. The Borel assumption is essential.

That said, Marstrand's theorem does not in general guarantee the Hausdorff measure of the Borel set is finite. Our next goal will be to deal with the Hausdorff measure of a continuous curve and also generalise to arbitrarily many points of intersection with our cones (still finitely many, though). It turns out that the curve has to always be $\sigma$-finite with respect to the $\cH^1$ measure.

In order to proceed we need set up things more rigorously:
\begin{notat*}
	Let $C(\phi,0)=\{(x,y)\in\R^2\such|y|\geq \tan(\phi)\,|x|\}$ denote the vertical closed cone in between the lines through the point $(0,0)$ with slopes $\tan(\phi)$ and $-\tan(\phi)$ (where $0<\phi<\frac{\pi}{2}$). By $C_+(\phi,0)$ we will denote the upper half of the cone $C(\phi,0)$, that is $C_+(\phi,0)=\{(x,y)\in\R^2\such|y|\geq \tan(\phi)\,|x|,\ y\geq0\}$, and by $C_-(\phi,0)$ its lower half. Let $C(\phi,\rho)$ be the cone's counter-clockwise rotation by angle $\rho$, $C(\phi,0,h)=B_0(h)\cap C(\phi,0)$, where $B_x(r)=B(x,r)$ is the closed ball centred at $x$ with radius $r$, and $C_P(\phi,0)$ the translation of $C(\phi,0)$ so that its vertex is the point $P$. Finally, $C^*$ will denote the dual cone of $C$, that is $C^*(\phi,0)=\0{C(\phi,0)^C}$. We will be combining different notation in a natural way, for example $C_+(\phi,\rho,h)$ is the upper half of the truncated and rotated cone with vertex at $0$.
	
	$\gamma\colon[0,1]\to\R^2$ will be a continuous curve.
\end{notat*}

	\subsection{The main hypothesis}

\begin{tequation} \label{hypo}
	Fix an integer $k\geq 2$. Fix an angle $\phi\in(0,\frac{\pi}{2})$ and a rotation $\rho\in[0,2\pi)$. A line contained inside the cone $C_P(\phi,\rho)$ for any point $P\in\R^2$ intersects the curve $\gamma$ at at most $k$ points.
\end{tequation}

Any such line will be called \emph{admissible}. A cone consisting of only admissible lines will also be called \emph{admissible}.

	\subsection{\texorpdfstring{$\gamma$}{?} is \texorpdfstring{$\sigma$}{?}-finite}

For simplicity and without loss of generality we will assume the the curve $\gamma\colon[0,1]\to\R^2$ is bounded inside the unit square and that $(0,0),(1,1)\in \gamma$. We additionally assume that the cones of our hypothesis are vertical, i.e., that $\rho=0$.

\begin{theorem}
	$\gamma$ can be split into countably many sets $\gamma_n$ with finite $\cH^1$ measure. In particular, $\gamma$ is 1-rectifiable.
\end{theorem}

The following lemma plays a key role in the proof of this theorem, but we will postpone its proof until later.

\begin{lem} \label{free_of_cones}
	For every point $P\in \gamma$ there exists an admissible cone $C_P(\theta,\rho,h)$ that avoids the curve $\gamma$ except at $P,$ that is $C_P(\theta,\rho,h)\cap \gamma=\{P\}$.
\end{lem}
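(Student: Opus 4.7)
The plan is to analyze the set of limiting directions of $γ$ at $P$ and show it can meet the admissible direction arc in only finitely many points, leaving room to pick a narrow admissible cone that dodges the curve. After translating so that $P=0$ and rotating so that $ρ=0$, first consider the case that $γ$ is injective with $γ^{-1}(0)=\{t_0\}$, and define the direction map $θ(t):=γ(t)/|γ(t)|\in S^1$ on a punctured neighborhood of $t_0$. Let $T^\pm$ denote the cluster set of $θ(t)$ as $t\to t_0^\pm$. Since $T^\pm=\bigcap_{δ>0}\overline{θ((t_0,t_0\pm δ))}$ is a decreasing intersection of compact connected subsets of $S^1$, each $T^\pm$ is closed and connected, and thus either a point, a closed proper arc, or all of $S^1$.

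The crucial step is to show that no interior point of $T^\pm$ in the $S^1$ topology lies in the admissible direction arc $A:=S^1\cap C(φ,0)$. Suppose by contradiction $v\in\operatorname{int}(T^+)\cap A$; then pick $v_1,v_2\in T^+$ strictly on opposite sides of $v$, and the cluster-set definition produces sequences $t_n^1,t_n^2\to t_0^+$ with $θ(t_n^j)\to v_j$. Interleaving them and applying the intermediate value theorem to the continuous function $θ$ yields infinitely many distinct times $s_n\to t_0^+$ with $θ(s_n)=v$. Since $γ$ is injective, the points $γ(s_n)$ are distinct and all lie on the line through $P$ in direction $v$, which is admissible; this contradicts the main hypothesis~(\ref{hypo}).

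Consequently $T_Pγ:=T^+\cup T^-$ meets $A$ in at most four points: from each $T^\pm$ one collects at most the two boundary points of an arc (which then must lie on $\partial A$, the four cone-edge directions) or the single point where $T^\pm$ is a singleton. Pick $v_*\in\operatorname{int}(A)$ with $v_*\notin T_Pγ$; by the antipodal symmetry $A=-A$ one also has $-v_*\notin T_Pγ$. Take a small open arc $U\ni v_*$ inside $\operatorname{int}(A)$ with $\overline{U\cup(-U)}\cap T_Pγ=\emptyset$, and choose $θ'\in[φ,π/2)$ close enough to $π/2$ and $ρ'$ so that the direction set of $C(θ',ρ')$ lies in $U\cup(-U)$; this makes $C_P(θ',ρ',h)$ admissible for every $h>0$. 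Finally, since $T^\pm$ is disjoint from the closed set $\overline{U\cup(-U)}$, one can pick $h>0$ so small that $θ(t)\notin U\cup(-U)$ for every $t\in γ^{-1}(B_P(h))\setminus\{t_0\}$, and then $C_P(θ',ρ',h)\cap γ=\{P\}$.

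The principal obstacle I anticipate is handling non-injective $γ$: if $P$ has several (possibly accumulating) preimages, the cluster set must be assembled over all of them, and one must verify the argument at each preimage. For $h$ small, $γ^{-1}(B_P(h))$ is a neighborhood of $γ^{-1}(P)$ in $[0,1]$, and whenever it decomposes into finitely many components (which will hold under mild regularity) the injective argument applies componentwise; genuinely pathological preimage sets would require a more delicate analysis or an explicit regularity assumption on $γ$.
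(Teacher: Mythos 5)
Your argument for the model case---$\gamma$ injective with $\gamma^{-1}(P)$ a single parameter value $t_0$---is correct and genuinely different from the paper's proof: the cluster sets $T^{\pm}$ of the direction map $\theta(t)=(\gamma(t)-P)/|\gamma(t)-P|$ are compact and connected, the intermediate value theorem converts any interior point of $T^{\pm}$ lying in the admissible arc $A$ into infinitely many intersections of $\gamma$ with a single admissible line through $P$, and a sub-arc of $\operatorname{int}(A)$ avoiding $T^{+}\cup T^{-}$ then yields the desired cone. (Two small repairs: when $T^{\pm}$ is a singleton it may sit in $\operatorname{int}(A)$ rather than on $\partial A$, and $-v_{*}\notin T^{+}\cup T^{-}$ does not follow from $A=-A$, since the cluster set need not be antipodally symmetric; but $T^{+}\cup T^{-}$ meets $\operatorname{int}(A)$ in at most finitely many points in every case, so one simply chooses $v_{*}$ with both $\pm v_{*}$ outside it.)

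The gap is exactly the one you flag in your last paragraph, and it is not a removable technicality: Lemma \ref{free_of_cones} assumes only that $\gamma$ is continuous, so $\gamma^{-1}(P)$ may be an infinite closed set and, worse, $\gamma^{-1}(B_P(h))$ may have infinitely many connected components for every $h>0$ --- the curve may make infinitely many excursions close to $P$ without passing through $P$, and hypothesis \eqref{hypo} does not obviously forbid this, because distinct excursions may enter and leave the cone $C_P(\varphi,0)$ through the \emph{same} points of its side lines (a line meeting the graph in a single point may be traversed by the curve at that point many times). In that situation the set of limiting directions at $P$ is no longer a finite union of arcs swept out continuously in the parameter, and the intermediate-value step has nothing to act on; a priori the direction cluster set could be dense in $A$ without any single admissible line through $P$ carrying more than $k$ points of $\gamma$. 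Ruling this out is precisely the content of the paper's proof: Lemma \ref{components} uses \eqref{hypo} to bound the number of connected components of $C_P(\varphi,0)\cap\gamma$ in the \emph{image} (not the parameter), which isolates the component through $P$ after truncation, and the subsequent bisection into $2^{n}\geq 2k+3$ sectors together with the transversal line $l_{\varepsilon}$ manufactures $k+1$ intersections with one admissible line whenever every sub-cone still meets $\gamma\setminus\{P\}$. To complete your proof you would need an argument of comparable strength for a general continuous curve; as written, the proposal establishes the lemma only under an injectivity/regularity hypothesis that the statement does not grant.
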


In view of Lemma \ref{free_of_cones} --- by slightly tilting $\rho$, enlarging $\theta$ and monotone decreasing $h$ --- we may assume the triplet $(\theta,\rho,h)$ consists of rational numbers. If $\{(\theta_n,\rho_n,h_n)\}$ is an enumeration of all rational triples that still lie within our admissible set, then we can decomposed $\gamma$ into the countably many sets
\[
\gamma_n=\big\{P\in \gamma\such C_P(\theta_n,\rho_n,h_n)\cap \gamma=\{P\}\big\}
\]
(see Figure~\ref{covering_balls}). Note that $\gamma_n$ are not necessarily disjoint for different values of~$n$.

We proceed to prove each one of them has finite $\cH^1$ measure. Note that this is not new knowledge and it can be found, for example, in \cite[Lemma 3.3.5]{F} or \cite[Lemma 15.13]{PMbook} in a more general setup. Nevertheless, we present it here for completeness. 

For the rest of this section $n$ will be fixed.

\begin{figure}
	\centering
	\includegraphics[width=0.85\textwidth]{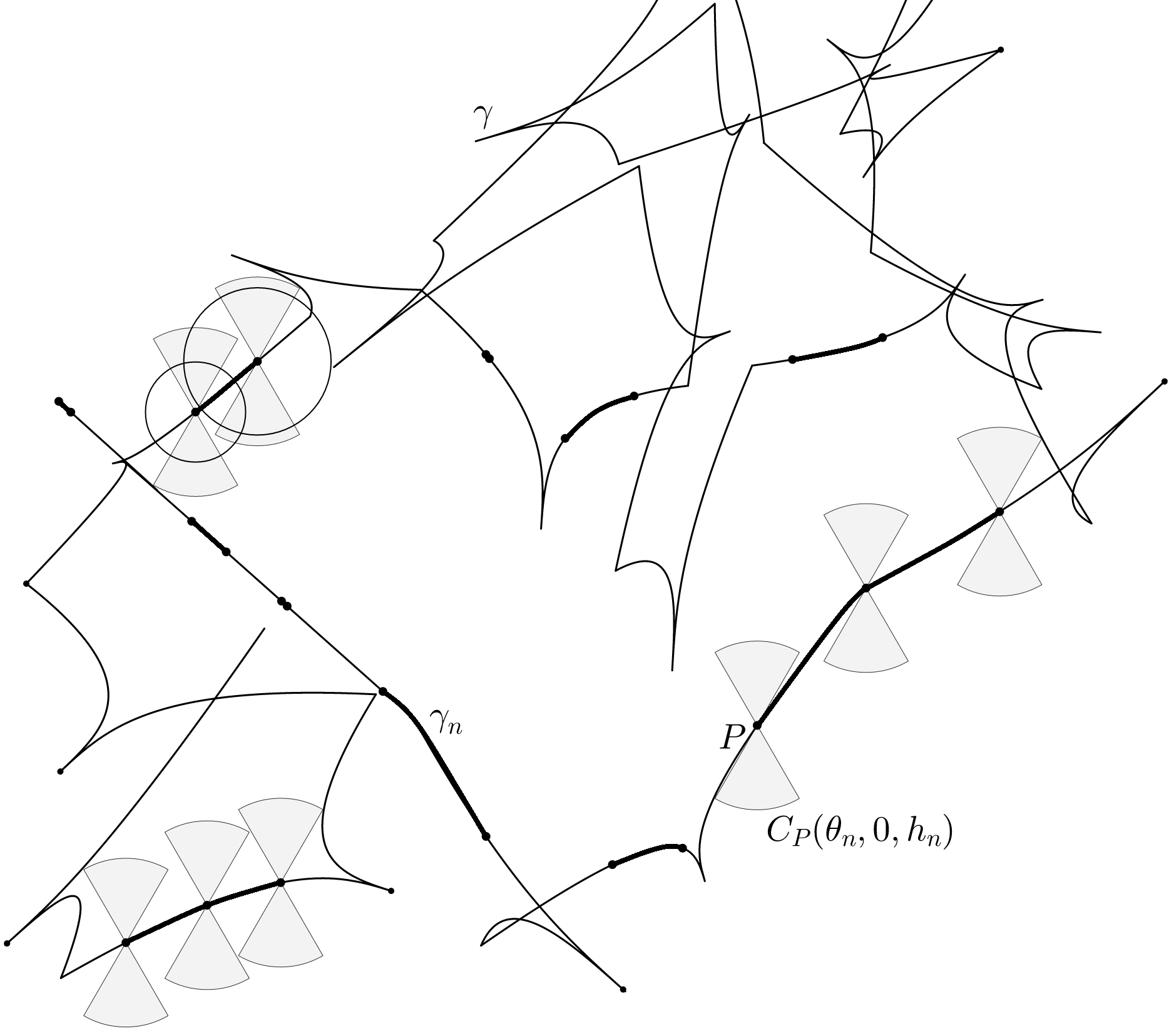}
	\caption{The curve $\gamma$ and its part $\gamma_n$ for $\theta_n$, $\rho_n=0$, and $h_n$.}
	\label{covering_balls}
\end{figure}

\begin{lem}
	$\cH^1(\gamma_n)<\frac{2k}{\cos(\theta_n)}$.
\end{lem}

\begin{proof}
Without loss of generality we may assume the cone $C_P(\theta_n,\rho_n,h_n)$ is vertical, i.e., that $\rho_n=0$. Let us now split the unit square into $N$ vertical strips, $S_j$ ($j=1,2,\dots,N$), of base length $\frac{1}{N}$ with $N$ sufficiently large so that $\frac{1}{N}<\cos(\theta_n)\,h_n$. Let $J$ be the set of indices $j$ for which
\[
S_j\cap \gamma_n\neq\emptyset
\]
and for any point $P\in \gamma$ denote the connected component of $\gamma$ inside $S_j$ through $P\in S_j\cap \gamma$ by $\Gamma^*_P(j)$.

Fix a $j\in J$ and consider a point $P\in S_j\cap \gamma_n$. Since $\frac{1}{N}<\cos(\theta_n)\,h_n$, the sides of $S_j$ necessarily intersect both sides of the cone $C_P(\theta_n,0,h_n)$ creating thus two triangles both contained inside the ball $B_P\big(\frac{1}{N\cos(\theta_n)}\big)$ (see Figure~\ref{the_cone}). For any point $P'\in S_j\cap \gamma_n$ other than $P$ there are two cases: either $|P-P'|\leq h_n$ or $|P-P'|> h_n$. In the first case, the sets $\Gamma^*_P(j)$ and $\Gamma^*_{P'}(j)$ are both contained inside the two triangles $C^*_P(\theta_n,0)\cap S_j$. In the second, they are necessarily disjoint, because $C_P(\theta_n,0,h_n)$ is free from points of $\gamma$ (other than~$P$). These additionally imply that there can be no more than $\frac{1}{\sin(\theta_n)\,h_n}$ such distinct paths inside $S_j$. In particular,
\[
P\in \Gamma^*_P(j) \subset S_j\cap \gamma\cap B_P(h_n)\subset C^*_P(\theta_n,0,h_n)\cap S_j\subset B_P\left(\tfrac{1}{N\cos(\theta_n)}\right).
\]

\begin{figure}
	\centering
	\includegraphics[width=0.85\textwidth]{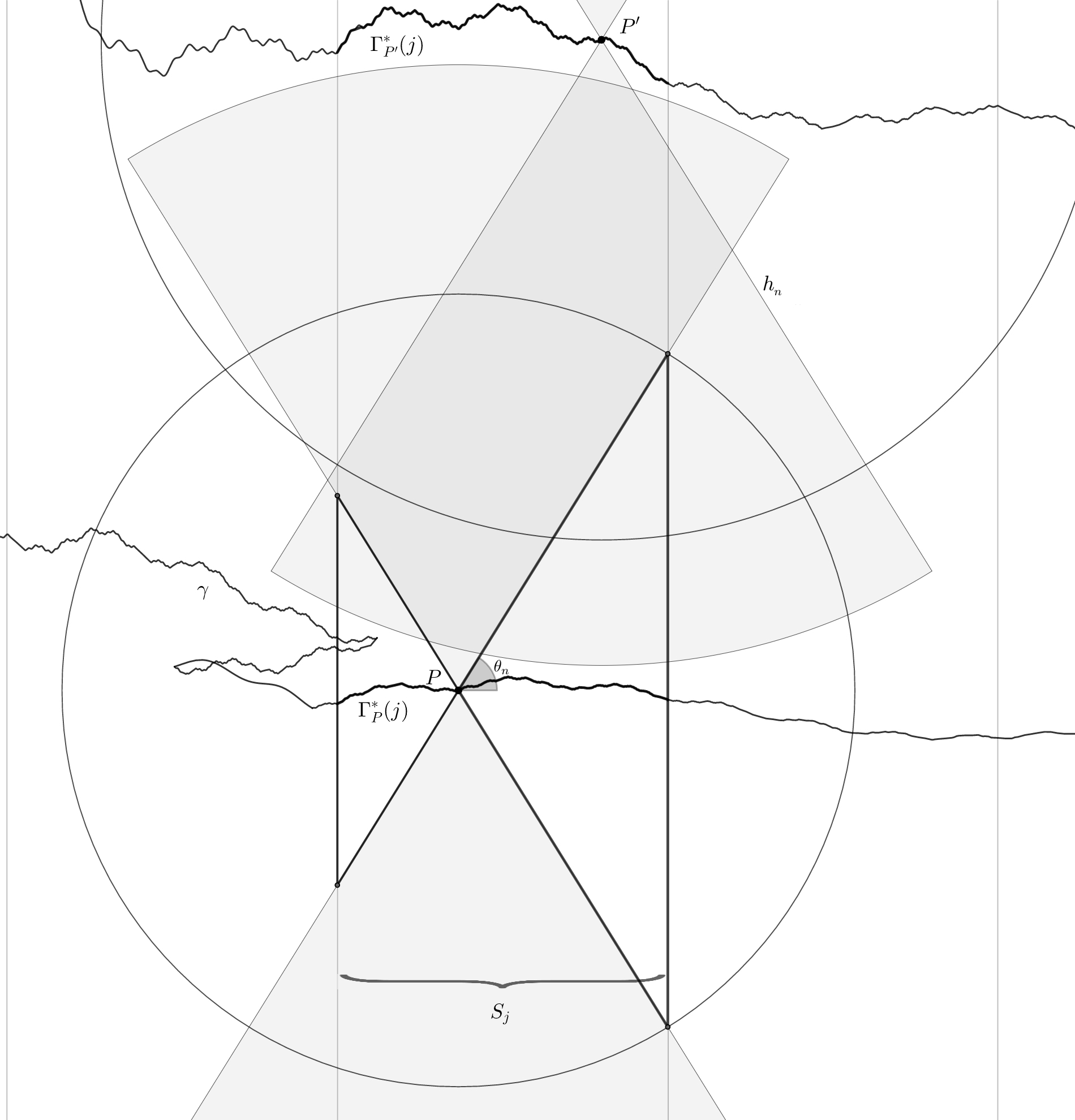}
	\caption{Each cone intersects a strip of length $\frac{1}{N}<\cos(\theta_n)\,h_n$.}
	\label{the_cone}
\end{figure}

Now, let $\mathcal{P}_j$ be a maximal set of points in $S_j\cap \gamma_n$ such that the sets $\Gamma^*_P(j)$ for 
$P\in\mathcal{P}_j$
are all disjoint and observe that $S_j\cap \gamma_n$ is covered by the balls $B_P\big(\tfrac{1}{N\cos(\theta_n)}\big)$ with $P\in\mathcal{P}_j$. Indeed, if $P_0\in S_j\cap \gamma_n$ is not inside the set $\bigcup_{P\in\mathcal{P}_j}B_P\big(\tfrac{1}{N\cos(\theta_n)}\big)$, then by construction it is also outside $\bigcup_{P\in\mathcal{P}_j}B_P(h_n)$ and therefore $\Gamma^*_{P_0}(j)$ and $\Gamma^*_P(j)$ are disjoint for all $P\in\mathcal{P}_j$, which contradicts the maximality of $\mathcal{P}_j$. Moreover, due to the connectedness of $\gamma$, the set $\{P\}$ has to be path-connected with $(0,0)$ and $(1,1)$ and therefore each $\Gamma^*_P(j)$ has to intersect at least one side of the strip $S_j$. Hence, because of \eqref{hypo}, there can be at most $2k$ of these paths, i.e., $\#(\mathcal{P}_j)\leq\min\{2k,\frac{1}{\sin(\theta_n)\,h_n}\}\leq 2k$ for every $j\in J$. Therefore,
\[
\gamma_n\cap S_j\subset\bigcup_{P\in\mathcal{P}_j} B_P\left(\tfrac{1}{N\cos(\theta_n)}\right)\implies \gamma_n\subset\bigcup_{j\in J}\bigcup_{P\in\mathcal{P}_j}B_P\left(\tfrac{1}{N\cos(\theta_n)}\right)
\]
and the total sum of the radii of these balls is at most
\[
2k \frac{1}{N\cos(\theta_n)}\#(J)\leq\frac{2k}{\cos(\theta_n)}.
\]

Finally, if $\1{\gamma}_n=\{P\in \gamma\such C_P(\theta_n,0,h_n/2)\cap \gamma=\{P\}\}$, then $\gamma_n\subset\1{\gamma}_n$. Repeating the above construction with $\frac{1}{N}<\cos(\theta_n)\,\frac{h_n}{2}$, we get a cover of $\1{\gamma}_n$ --- and thus of $\gamma_n$ --- consisting of balls with a total sum of radii at most $\frac{2k}{\cos(\theta_n)}$. The result follows.
\end{proof}

\begin{rem*}
	In the above construction we are in fact able to cover the whole part of $\gamma$ inside $\bigcup_{j\in J}S_j$ with the same balls, and not merely $\gamma_n$.
\end{rem*}

Eventually, the curve $\gamma$ has to be $\sigma$-finite.

	\subsection{Cones free of \texorpdfstring{$\gamma$}{?}}

Here we prove Lemma \ref{free_of_cones}.

Fix $P\in \gamma$. Since $\gamma$ is bounded, there must exist an $\1 h>0$ such that $C_P(\phi,0)\cap \gamma=C_P(\phi,0,\1 h)\cap \gamma$. If
\[
C_P(\phi',0)\cap \gamma=\{P\}\inline{or}C_P(\phi',0,h)\cap \gamma=\{P\}
\]
for some $\phi'\in[\phi,\frac{\pi}{2})$ and some $h>0$, then we are done.

Suppose this does not happen. Then, for all $\phi'\in[\phi,\frac{\pi}{2})$ and for all sufficiently small $h>0$ we have
\begin{equation} \label{no_1cone}
C_P(\phi',0,h)\cap \gamma\setminus\{P\}\neq\emptyset.
\end{equation}

\begin{lem} \label{components}
	For any $P\in \gamma$ the set $C_P(\phi,0)\cap \gamma$ has finitely many (closed) connected components.
\end{lem}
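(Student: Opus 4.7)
My plan is to prove the bound on the number of components by a boundary-counting argument: every connected component of $C_P(φ,0)\cap γ$ will be shown to contain at least one point from a finite ``special set'', whose cardinality is controlled by the main hypothesis \eqref{hypo}.

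I would start by setting $J:=γ^{-1}(C_P(φ,0))$, a closed subset of $[0,1]$ whose connected components are closed subintervals $[a,b]$ (possibly degenerate to a single point). The crucial observation is that for every such component $[a,b]$ with $a>0$, one has $γ(a)\in\partial C_P(φ,0)$: by maximality of $[a,b]$, $γ(t)\notin C_P(φ,0)$ for $t$ slightly less than $a$, and since $\mathrm{int}\,C_P(φ,0)$ is open, continuity of $γ$ forbids $γ(a)$ from lying in this interior. The symmetric statement holds at $b$ if $b<1$.

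Next, I would relate components of $J$ to components of $C_P(φ,0)\cap γ$. For any component $[a,b]$ of $J$, the image $γ([a,b])$ is connected and contained in $C_P(φ,0)\cap γ$, hence lies entirely inside a single component of the latter, forcing $γ(a),γ(b)$ to belong to it. Conversely, every connected component $K$ of $C_P(φ,0)\cap γ$ has nonempty preimage in $J$, and therefore contains the image of at least one component $[a,b]$ of $J$. Combining with the preceding step, every such $K$ contains at least one point of the special set
$$S:=\{γ(0),γ(1)\}\cup\bigl(γ\cap\partial C_P(φ,0)\bigr).$$

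The final step is a counting argument for $S$. Since $\partial C_P(φ,0)$ is exactly the union of the two lines $\ell_1,\ell_2$ through $P$ of slopes $\pm\tan φ$, and each of them is admissible (being contained in $C_P(φ,0)$ itself), hypothesis \eqref{hypo} yields $\#(γ\cap\ell_i)\leq k$, so $\#S\leq 2k+2$. Distinct components of $C_P(φ,0)\cap γ$ being disjoint, the assignment sending each component to a point of $S$ it contains is injective, bounding the total number of components by $2k+2$. The only mildly delicate point is the continuity-based identification $γ(a)\in\partial C_P(φ,0)$, which is where the openness of $\mathrm{int}\,C_P(φ,0)$ enters; no serious obstacle is anticipated.
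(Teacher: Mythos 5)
Your proof is correct and rests on the same key idea as the paper's: every connected component of $C_P(φ,0)\cap γ$, with a bounded number of exceptions, must contain a point of $γ\cap\partial C_P(φ,0)$, and since the two boundary lines are themselves admissible, hypothesis \eqref{hypo} caps that set at $2k$ points. Your route through the preimage $J=γ^{-1}(C_P(φ,0))$ and the endpoints of its components gives a more careful justification of the boundary-crossing step than the paper's appeal to path-connectedness with $P$, and as a small bonus it does not use that $P$ lies on $γ$ (a fact the paper only notes in a remark afterwards).
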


\begin{proof}
Since $\gamma$ is connected, every point of $C_P(\phi,0)\cap \gamma$ has to be path-connected with the point $P$ through some part of the curve $\gamma$. There are two possibilities: either that path is entirely contained inside $C_P(\phi,0)$ or it has to pass through its sides. If a path does not intersect the sides, then it necessarily has to pass through $P$ otherwise $\gamma$ would not be connected. This yields precisely one connected component --- the one containing $P$ --- and all the rest (if any) have to intersect the sides of the cone.
If these components are infinitely many, there have to exist also infinitely many points of intersection on the sides of the cone; at least one for each connected component. But this contradicts~\eqref{hypo}.
\end{proof}

\begin{rem*}
	The connected components of Lemma \ref{components} total at most $2k$ and $P$ need not be a point of the curve. This lemma is still valid regardless of the cone we are working with as soon as it is in our admissible family of cones.
\end{rem*}

Let $\Gamma_P(\phi,0)$ be the connected component of $C_P(\phi,0)\cap \gamma$ that contains the point $P$, which because of \eqref{no_1cone} cannot be precisely the point set $\{P\}$. Because of Lemma \ref{components}, the set $C_P(\phi,0)\cap \gamma\setminus \Gamma_P(\phi,0)$ is compact and thus there exists $h_0>0$ such that $C_P(\phi,0,h_0)\cap \gamma\subset \Gamma_P(\phi,0)$. Observe that $C_P(\phi,0)\cap \gamma\setminus \Gamma_P(\phi,0)$ could be empty in general in which case $h_0=\infty$, however, we can always assume that $h_0\leq\1h$.

Next, we bisect our cone into two new identical cones sharing one common side
\[
C_P(\phi,0)=C_P(\phi_1,\rho_1)\cup C_P(\phi_1,-\rho_1),
\]
where $\phi_1=\frac{\pi}{4}+\frac{\phi}{2}$ and $\rho_1=\frac{\pi}{4}-\frac{\phi}{2}$, and repeat the above arguments for each new cone: If
\[
C_P(\phi',\rho_1)\cap \gamma=\{P\}\inline{or}C_P(\phi',\rho_1,h)\cap \gamma=\{P\}
\]
for some $\phi'\in[\phi_1,\frac{\pi}{2})$ and some $h>0$, then we are done. Similarly for $-\rho_1$ in place of $\rho_1$.

Suppose none of these happen. Then, for all $\phi'\in[\phi_1,\frac{\pi}{2})$ and for all sufficiently small~$h$ and~$h'$ we have
\begin{equation} \label{no_2cone}
C_P(\phi',\rho_1,h)\cap \gamma\setminus\{P\}\neq\emptyset\inline{and}C_P(\phi',-\rho_1,h')\cap \gamma\setminus\{P\}\neq\emptyset.
\end{equation}
We denote by $\Gamma_P(\phi_1,\rho_1)$ and $\Gamma_P(\phi_1,-\rho_1)$ the connected component of 
\[
C_P(\phi_1,\rho_1)\cap \gamma
\inline{and} 
C_P(\phi_1,-\rho_1)\cap \gamma
\]
 containing $P$, respectively. Then, the sets $C_P(\phi_1,\rho_1)\cap \gamma\setminus \Gamma_P(\phi_1,\rho_1)$ and $C_P(\phi_1,-\rho_1)\cap \gamma\setminus \Gamma_P(\phi_1,-\rho_1)$ are compact (thanks to Lemma \ref{components}) and thus there exist $h_{1,0},h_{1,1}\in(0,\1 h]$ such that $C_P(\phi_1,\rho_1,h_{1,0})\cap \gamma\subset \Gamma_P(\phi_1,\rho_1)$ and $C_P(\phi_1,-\rho_1,h_{1,1})\cap \gamma\subset \Gamma_P(\phi_1,-\rho_1)$.

\begin{figure}
	\centering
	\includegraphics[draft=false,width=0.85\textwidth]{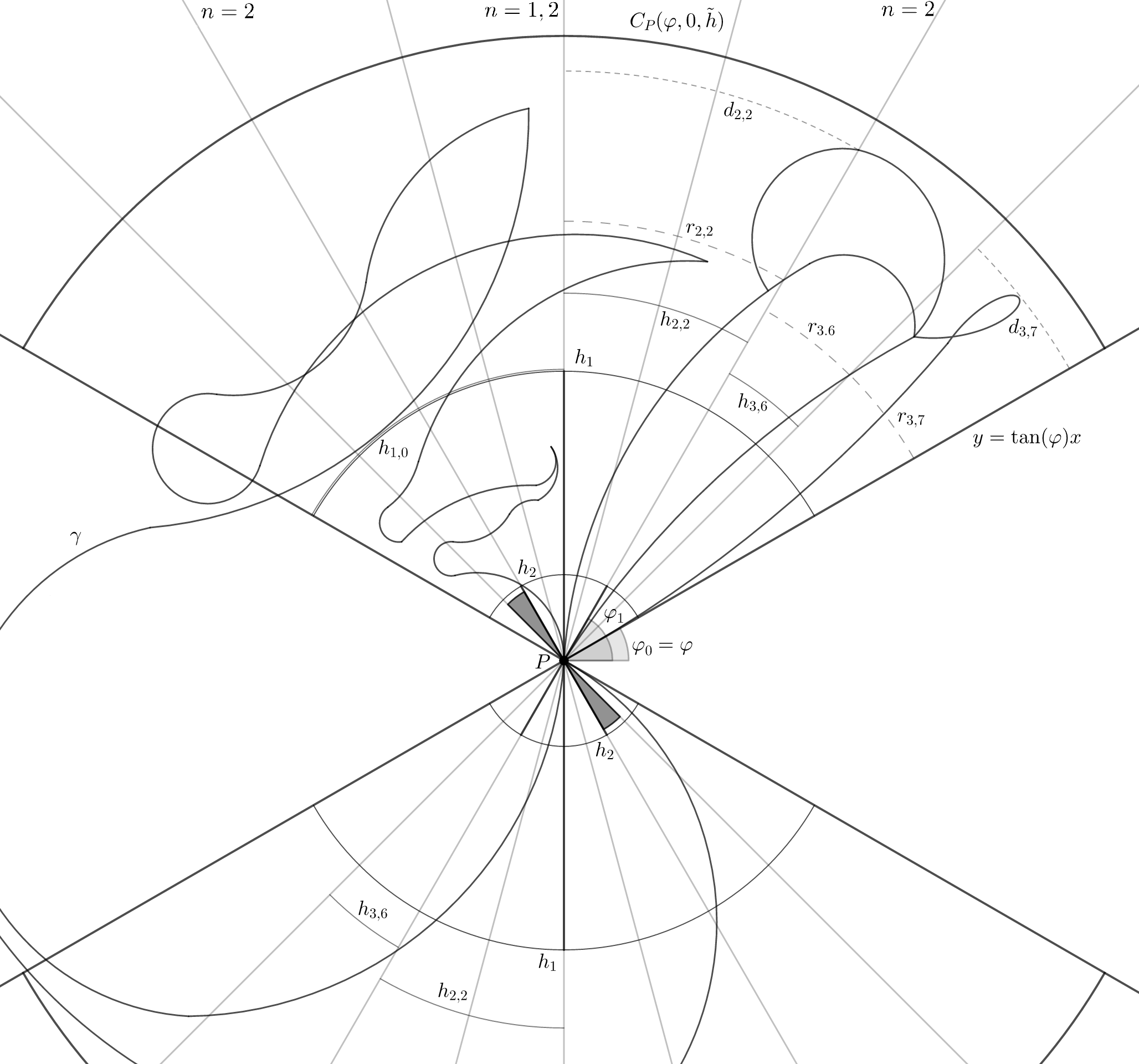}
	\caption{Finding a cone free from points of $\gamma$. The parameters $r$, $d$, and $h$ determine the radius.}
	\label{avoiding_cone}
\end{figure}

We iterate this construction indefinitely (Figure~\ref{avoiding_cone}). If at any step we get
\begin{equation} \label{cone_find}
C_P(\phi',\rho,h)\cap \gamma=\{P\}
\end{equation}
for some $\phi'$, $\rho$, and $h$, then we have found our desired cone and we stop. Otherwise, we get an infinite sequence of smaller and smaller cones satisfying the following:
\begin{align*}
\{P\}\varsubsetneq C_P(\phi_n,\rho_{n,i},h_{n,i})\cap \gamma\subset \Gamma_P(\phi_n,\rho_{n,i})\subset C_P(\phi_n,\rho_{n,i})&
\\
\mbox{for all }i=0,1,\dots,2^n-1&
\end{align*}
for all $n\geq 0$ where
\begin{align*}
& \phi_0=\phi && \phi_1=\frac{\pi}{4}+\frac{\phi}{2} && && \phi_n=\frac{\pi}{4}+\frac{\phi_{n-1}}{2}\\
& \rho_{0,0}=0 && \rho_{1,0}=\rho_1=\frac{\pi}{4}-\frac{\phi}{2} && \rho_{1,1}=-\rho_1 && \rho_{n,i}=(\phi_n-\phi)-i\frac{2(\phi_n-\phi)}{2^n-1}\\
& h_{0,0}=h_0 && && && 0<h_{n,i}\leq\1 h.
\end{align*}
Note that at the $n$th iteration we have exactly $2^n$ truncated closed cones separated by the lines
\[
l_{n,i}=P+\big\{(x,y)\such y=\tan(\pi-\phi_n+\rho_{n,i})\,x\big\}
\]
through $P$. The sets $\Gamma_P(\phi_n,\rho_{n,i})$ might intersect these lines, but this can happen at at most $k$ may points due to \eqref{hypo}. Let $r_{n,i}$ be the smallest distance between these points of intersection (if any) and $P$, that is
\[r_{n,i}=\dist\big(P,l_{n,i}\cap \Gamma_P(\phi_n,\rho_{n,i})\setminus\{P\}\big)\]
(again we can arbitrarily set some $0<r_{n,i}\leq\1 h$ if $l_{n,i}\cap \Gamma_P(\phi_n,\rho_{n,i})\setminus\{P\}=\emptyset$) and let
\[d_{n,i}=\min\Big\{\sup\{d(P,\Gamma_{P+}(t)\setminus P)\such t\in(0,1]\},\ \sup\{d(P,\Gamma_{P-}(t)\setminus P)\such t\in(0,1]\}\Big\}\]
where $\Gamma_{P+}(t)$ and $\Gamma_{P-}(t)$ are parametrisations of $\Gamma_P(\phi_n,\rho_{n,i})\cap C_{P+}(\phi_n,\rho_{n,i})$ and $\Gamma_P(\phi_n,\rho_{n,i})\cap C_{P-}(\phi_n,\rho_{n,i})$ respectively (which in general could be precisely the point set $\{P\}$) with $\Gamma_{P+}(0)=\Gamma_{P-}(0)=P$. Finally, we set
\[h_n=\min\{r_{n,i},\ d_{n,i},\ h_{n,i}\such i=0,1,\dots,2^n-1\}.\]
Since the above set is finite, $h_n>0$. From this construction for every $n\geq 0$ we get a collection of truncated cones $C_P(\phi_n,\rho_{n,i},h_n)$, for $i=0,1,\dots,2^n-1$, (see Figure~\ref{avoiding_cone}) that have the following property.
\begin{tequation} \label{paths}
	There is a path (part of $\gamma$) lying inside the cone that connects the point $P$ with at least one of the two arcs of length $(\pi-2\phi_n)h_n$ which bound the cone $C_P(\phi_n,\rho_{n,i},h_n)$. Moreover, these paths avoid any other intersections with that cone's boundary aside $P$ and the (closed) arc(s).
\end{tequation}

Now, fix $n$ sufficiently large so that $2^n\geq 2k+3$. Then, we can find at least $k+2$ of the cones $C_P(\phi_n,\rho_{n,i},h_n)$ that contain some path of those mentioned at \eqref{paths} all lying on the same half-cone, say on $C_{P+}(\phi,0,h_n)$. Consider one of the sides of our initial cone $C_P(\phi,0)$, say $l=P+\{(x,y)\such y=\tan(\phi)\,x\}$, fix $0<\epsilon<h_n\sin(\pi-2\phi_n)$ and translate $l$ vertically by $\epsilon$: $l_\epsilon=l+(0,\epsilon)$. Then, $l_\epsilon$ necessarily intersects all the $2^n$ different sectors of the ball $B_P(h_n)$ inside $C_{P+}(\phi,0,h_n)$, but only the right-most one, $C_{P+}(\phi_n,\rho_{n,2^n-1},h_n)$, at its arc-like part of the boundary. In particular, $l_\epsilon$ has to intersect the sides of at least $k+1$ sectors that contain the paths described in \eqref{paths} and therefore also intersects these paths. Hence, $l_\epsilon$ is one of our admissible lines that has at least $k+1$ intersections with $\gamma$, a contradiction.

Lemma \ref{free_of_cones} is proved.
\hfill\qed

\begin{rems*}
	\begin{enumerate}[i)]
		\item In the definition of $h_n$, three different parameters occur, $r_{n,i}$, $d_{n,i}$, and $h_{n,i}$. Without $h_{n,i}$, \eqref{cone_find} automatically fails; $d_{n,i}$ is to ensure $\Gamma_P(\phi_n,\rho_{n,i})$ will always intersect the boundary of the corresponding cone and $r_{n,i}$ forces this intersection to avoid the sides.
		\item In the above construction we bisected the initial cone into $2$, $4$, $8$ etc. smaller cones every time. However, any possible way to cut the cones would still work as soon as it eventually yields an infinite sequence. 
		\item The same proof can be applied to any cone within our admissible set of directions.
	\end{enumerate}
\end{rems*}

\section{Higher dimensions}

Mattila in \cite[Lemma 6.4]{PM} generalised Marstrand's results from \cite{JM} and showed the following.
\begin{lem}[Mattila] \label{l:Mattila--Marstrand}
	Let $E$ be an $\cH^s$ measurable subset of $\R^n$ with $0<\cH^s(E)<\infty$. Then,
	\[\dim(E\cap (V+x))\geq s+m-n\]
	for almost all $(x,V)\in E\times G(n,m)$. 
\end{lem}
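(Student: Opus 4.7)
The plan is to convert the Hausdorff-measure hypothesis into a Riesz-energy statement via Frostman's lemma, disintegrate the resulting measure along the foliation of $\R^n$ by affine $m$-planes parallel to $V$, and establish a global energy inequality that, via Fubini over $G(n,m)$ together with the slice parameter, forces finite $(s+m-n-\varepsilon)$-energy on almost every slice. Exhausting $\varepsilon \downarrow 0$ along rationals then yields the dimension bound. The case $s \leq n-m$ is trivially $\dim \geq 0$, so one may assume $s > n-m$.

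First, since $0 < \H^s(E) < \infty$, Frostman's lemma produces a finite nonzero Borel measure $\mu$ supported on $E$ with $\mu(B(x,r)) \leq Cr^s$ for all balls, so the Riesz $t$-energy
\[
I_t(\mu) \;=\; \iint |u-v|^{-t}\, d\mu(u)\, d\mu(v)
\]
is finite for every $t < s$. A nonzero Borel measure with finite $\alpha$-energy is supported on a set of Hausdorff dimension at least $\alpha$, so it suffices to exhibit, for almost every $(x,V) \in E \times G(n,m)$ and every rational $t \in (n-m, s)$, a nonzero measure on $E \cap (V+x)$ of finite $(t+m-n)$-energy.

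For each $V$, disintegrate $\mu$ along the orthogonal projection $P_{V^\perp}\colon \R^n \to V^\perp$ to obtain sliced measures $\mu_{V,y}$ supported on $(V+y)\cap E$ with $\mu = \int_{V^\perp} \mu_{V,y}\, d\H^{n-m}(y)$. The heart of the argument is the global inequality
\[
\int_{G(n,m)} \int_{V^\perp} I_{t+m-n}(\mu_{V,y})\, d\H^{n-m}(y)\, d\gamma_{n,m}(V) \;\leq\; C_{n,m,t}\, I_t(\mu),
\]
valid for $n-m < t < s$, where $\gamma_{n,m}$ is the rotation-invariant probability measure on $G(n,m)$. A clean route rewrites both sides via Plancherel in terms of $|\widehat{\mu}|^2$ and invokes the Grassmannian averaging identity
\[
\int_{G(n,m)} |P_V(z)|^{-\alpha}\, d\gamma_{n,m}(V) \;=\; c(n,m,\alpha)\, |z|^{-\alpha} \qquad (\alpha < m),
\]
applied at $\alpha = n-t$; the restriction $t > n-m$ is exactly the condition $\alpha < m$ ensuring convergence of this average.

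By Fubini, the inequality forces $I_{t+m-n}(\mu_{V,y}) < \infty$ for $(\gamma_{n,m} \otimes \H^{n-m})$-a.e. $(V,y)$. Reparametrizing each slice by a point $x \in E \cap (V+y)$, so $y = P_{V^\perp}(x)$, and using that $\H^s|_E$-null sets are $\mu$-null by the Frostman bound, transfers this to $(\H^s|_E \otimes \gamma_{n,m})$-a.e. $(x,V)$; intersecting over a countable sequence $t_k \uparrow s$ with $t_k > n-m$ then produces a full-measure set on which $\dim(E \cap (V+x)) \geq s+m-n$. The main obstacle is the sliced-energy inequality itself: the rigorous construction of the Borel disintegration $\mu_{V,y}$, the justification of the Plancherel/Fubini exchange, and the verification of the Grassmannian averaging identity all require care, and transferring the almost-everywhere statement from $(V,y)$-coordinates to $(x,V) \in E \times G(n,m)$-coordinates must lean on the $\mu \ll \H^s|_E$ estimate supplied by Frostman.
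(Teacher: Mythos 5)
The paper does not actually prove this lemma---it is quoted from Mattila \cite[Lemma 6.4]{PM} (see also \cite{PMbook})---so there is no in-paper argument to compare with; your sketch is essentially the textbook route (Frostman measure, sliced energies, Grassmannian averaging of $|P_V(z)|^{-\alpha}$ for $\alpha<m$, Fubini, then $t\uparrow s$), and that architecture is sound. There is, however, one genuine logical error: the final transfer from ``$\mu$-a.e.\ $x$'' to ``$\H^s|_E$-a.e.\ $x$''. Frostman's lemma gives $\mu(A)\leq C\,\H^s(A\cap E)$, i.e.\ $\mu\ll\H^s|_E$, which is exactly the \emph{wrong} direction: a property holding $\mu$-almost everywhere need not hold $\H^s|_E$-almost everywhere unless $\H^s|_E\ll\mu$, and a Frostman measure may be carried by a compact $F\subset E$ with $\H^s(E\setminus F)>0$, in which case your argument says nothing about the points of $E\setminus F$. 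The standard repair is not to take an arbitrary Frostman measure but to use the upper-density theorem ($\limsup_{r\to0}\H^s(E\cap B(x,r))(2r)^{-s}\leq 1$ for $\H^s$-a.e.\ $x\in E$) to write $E=N\cup\bigcup_i E_i$ with $\H^s(N)=0$ and each $\H^s|_{E_i}$ itself satisfying a Frostman bound; running your energy argument with $\mu=\H^s|_{E_i}$ makes ``$\mu$-a.e.'' literally ``$\H^s|_{E_i}$-a.e.'', and since $\dim\bigl(E\cap(V+x)\bigr)\geq\dim\bigl(E_i\cap(V+x)\bigr)$ the union over $i$ gives the stated conclusion.

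Two smaller points you should make explicit. First, finite $(t+m-n)$-energy of $\mu_{V,y}$ is vacuous when $\mu_{V,y}=0$; you need $\mu_{V,P_{V^\perp}(x)}\neq 0$ for a.e.\ $(x,V)$, which follows from the disintegration identity $\int_{V^\perp}\mu_{V,y}\,d\H^{n-m}(y)=\mu$ via $\mu\bigl(\{x:\mu_{V,P_{V^\perp}(x)}=0\}\bigr)=\int_{\{y:\mu_{V,y}=0\}}\mu_{V,y}(\R^n)\,d\H^{n-m}(y)=0$. Second, that identity (with equality rather than only $\leq$, so no mass is lost) requires the projection of $\mu$ to $V^\perp$ to be absolutely continuous with respect to $\H^{n-m}$; this holds for $\gamma_{n,m}$-a.e.\ $V$ because $I_{n-m}(\mu)<\infty$ once $s>n-m$, but it is part of why the hypothesis $t>n-m$ enters beyond the convergence of the Grassmannian average. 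With these repairs the proof goes through.
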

In particular, for a Borel set in, say, $\R^2$ we have:
\begin{tequation*}
	if any 2-dimensional plane in a positive measure of directions intersects this Borel set at a set of Hausdorff dimension at most 1, then the Hausdorff dimension of this Borel set is at most 2.
\end{tequation*}
Furthermore, if every line in the direction of some 2-dimensional cone intersects a Borel set (not merely the graph of some continuous function) at at most countably many points, then any 2-dimensional plane in a positive measure of directions intersects this Borel set by a set of Hausdorff dimension at most $1$ (Marstrand) and then the Hausdorff dimension of this Borel set is at most $2$ (Mattila).

Of course, the same is also true in $\R^n$, that is, if a Borel set has countable intersection with a certain cone of lines, then its dimension does not exceed $n-1$.

\smallskip

Now, we restrict our attention to what happens with only $2$ points of intersection in higher dimensions and we would like to generalize Proposition \ref{2D} to $\R^n$.

Suppose we have a continuous function $z=f(x,y)$, say, on a square in $\R^2$, satisfying the property that
\begin{tequation} \label{2pts}
	any line in the direction of a certain open cone with axis along a vector $\textbf{v}\in\R^3$ intersects the graph at at most two points.
\end{tequation}
Then, we would want $f$ to obey the same rule. Namely we ask the following:
\begin{quest*} \label{3D}
	Is a continuous function on $(-1,1)^2$ having property \eqref{2pts} locally Lipschitz?
\end{quest*}

\section{Relationships with perturbation theory}

The problem we consider in this note grew from a question in perturbation theory of self-adjoint operators (see \cite{LTV}). The question was to better understand the structure of Borel sets in $\R^n$ that have a small intersection with a whole cone of lines. Marstrand's and Mattila's theorems in \cite{JM} and \cite{PM}, respectively, give a lot of information about the exceptional set of finite-rank perturbations of a given self-adjoint operator. The exception happens when singular parts of unperturbed and perturbed operators are \emph{not mutually singular}. It is known that this is a rare event in the sense that its measure is zero among all finite-rank perturbations. The paper \cite{LTV} proves a stronger claim: the dimension of a bad set of perturbations actually drops.

Let us explain what was the thrust from \cite{LTV} and why that paper naturally gives rise to the questions considered above: what is the structure of Borel sets in $\R^n$ that have a small intersection with all the lines filling a whole cone and their parallel shifts? 

In \cite{LTV}, a family of finite rank (self-adjoint) perturbations, $A_\alpha$, of a self-adjoint (suppose bounded for simplicity) operator $A$ in a Hilbert space $\mathcal{H}$ is considered:
\[A_\alpha := A + B\alpha B^*\]
parametrized by self-adjoint operators $\alpha\colon \mathbb{C}^d\to \mathbb{C}^d$ (i.e., Hermitian matrices). The operator $B\colon\mathbb{C}^d\to \mathcal{H}$ is a fixed injective and bounded operator. It is also assumed that range of $B$ is cyclic with respect to $A$. In the case when $d=1$ (rank-one perturbations), the Aronszajn-Donoghue theorem states that the singular parts of the spectral measures of $A$ and $A_\alpha$ are always mutually singular. However, it is known that for $d>1$ the singular parts of the spectral measures of unperturbed and perturbed operators are not always mutually singular. 

Notice that the space of perturbations, that is the space $H(d)$ of Hermitian $(d\times d)$ matrices, has dimension $d^2$. In \cite{LT}, it was proved that, given a singular measure $\nu$, the scalar spectral measure $\mu_\alpha$ of the perturbation $A_\alpha$ is \emph{not} singular with respect to $\nu$ for the set of $\alpha$'s having zero Lebesgue measure in $H(d)$. Such $\alpha$'s are called \emph{exceptional}, and this result shows that even though the set of exceptional $\alpha$'s can be non-empty (for $d>1$), it is a thin set. But is it maybe thinner?

In fact, the following result was proved in \cite{LT}. Fix $\alpha_0, \alpha_1\in H(d)$ where $\alpha_1$ is in the cone of positive Hermitian matrices and consider $\alpha(t)= \alpha_0+t\alpha_1$. Then, for any such $\alpha_0,\alpha_1$ there are at most countably many $t\in \R$ such that the $\alpha(t)$ is exceptional. This extra information allowed the authors in \cite{LTV} to prove that the Hausdorff dimension of exceptional perturbations is actually at most~${d^2-1}$.

The reader might have noticed an underlying geometric measure theory fact: a Borel set in $\R^n$ (here $n=d^2$) that has an at most countable intersection with a whole cone of lines and their parallel shifts is, in fact, of dimension $n-1$.

Thus the dimension drop detected in Marstrand's and Mattila's theorems was instrumental for the drop in dimension for exceptional perturbations.

It seems enticing to understand the structure of the sets that have even less than countable intersection with all parallel shifts of all lines from a fixed cone. Suppose the Borel set under investigation intersects only at at most two, or at most $k<\infty$, points with these lines. What additional knowledge one can obtain about this set? This question motivated the work presented in the previous sections.


\begin{thebibliography}{9}
	\bibitem{EL} Eiderman~V., Larsen~M., \emph{A ``rare'' plane set with Hausdorff dimension $2$}, 
	Proc. Amer. Math. Soc. {\bf149} (2021), no.~3, 1091--1098.
	
	\bibitem{F} Federer~H., \emph{Geometric measure theory}, Grundlehren Math. Wiss., Bd. 153, Springer Verlag, 1969. 
	
	\bibitem{JM} Marstrand~J.~M., \emph{Some fundamental geometrical properties of plane sets of fractional dimension},
	Proc. London Math. Soc. (3) {\bf4} (1954), no.~3, 257--302.
	
	\bibitem{LT} Liaw~C., Treil~S., \emph{Matrix measures and finite rank perturbations of self-adjoint operators}, J. Spectr. Theory \textbf{10} (2020), no.~4, 1173--1210.
	
	\bibitem{LTV} Liaw~C., Treil~S., Volberg~A., \emph{Dimension of the exceptional set in the Aronszajn-Donoghue theory for finite rank perturbations}, Preprint, 2019, pp. 1--7.
	
	\bibitem{PM} Mattila~P., \emph{Hausdorff dimension, orthogonal projections and intersections with planes},
	Ann. Acad. Sci. Fenn. Ser. A. I. Math. \textbf{l} (1975), no. 2, 227--244.
	\bibitem{PMbook} Mattila~P., \emph{Geometry of sets and measures in Euclidean spaces: Fractals and rectifiability}, Cambridge Stud. Adv. Math., vol. 44, Cambridge Univ. Press, Cambridge, 1995. 
\end{thebibliography}
\end{document}